\numberwithin{equation}{section}
\newtheorem{Theorem}{Theorem}[section]
\newtheorem*{t*}{Arnold--Thom conjecture}
\newtheorem{Lemma}[Theorem]{Lemma}
\newtheorem{Proposition}[Theorem]{Proposition}
 { \theoremstyle{definition}
\newtheorem{Definition}[Theorem]{Definition}

\newtheorem{Example}[Theorem]{Example}
\newtheorem{examples}[Theorem]{Examples}
\newtheorem{Remark}[Theorem]{Remark} }
\newcommand{\G}{\mathbb{G}}
\newcommand{\R}{\mathbb{R}}
\newcommand{\C}{\mathbb{C}}
\newcommand{\Z}{\mathbb{Z}}
\newcommand{\Q}{\mathbb{Q}}
\newcommand{\N}{\mathbb{N}}
\def \Gal{\operatorname{Gal}}
\def \DGal{\operatorname{DGal}}
\def \DAut{\operatorname{DAut}}
\def \DHom{\operatorname{DHom}}
\def \GL{\operatorname{GL}}
\def \SO{\operatorname{SO}}
\def \Id{\operatorname{Id}}
\def \grad{\operatorname{grad}}
\begin{document}
\allowdisplaybreaks

\newcommand{\arXivNumber}{2104.09548}

\renewcommand{\PaperNumber}{095}

\FirstPageHeading

\ShortArticleName{Real Liouvillian Extensions of Partial Differential Fields}

\ArticleName{Real Liouvillian Extensions\\ of Partial Differential Fields}

\Author{Teresa CRESPO~$^{\rm a}$, Zbigniew HAJTO~$^{\rm b}$ and Rouzbeh MOHSENI~$^{\rm b}$}

\AuthorNameForHeading{T.~Crespo, Z.~Hajto and R.~Mohseni}

\Address{$^{\rm a)}$~Departament de Matem\`atiques i Inform\`atica, Universitat de Barcelona,\\
\hphantom{$^{\rm a)}$}~Gran Via de les Corts Catalanes 585, 08007 Barcelona, Spain}
\EmailD{\href{mailto:teresa.crespo@ub.edu}{teresa.crespo@ub.edu}}
\URLaddressD{\url{http://www.ub.edu/tn/personal/crespo.php}}

\Address{$^{\rm b)}$~Faculty of Mathematics and Computer Science, Jagiellonian University,\\
\hphantom{$^{\rm a)}$}~ul.~\L ojasiewicza 6, 30-348 Krak\'{o}w, Poland }
\EmailD{\href{mailto:Zbigniew.Hajto@uj.edu.pl}{Zbigniew.Hajto@uj.edu.pl}, \href{mailto:Rouzbeh.Mohseni@doctoral.uj.edu.pl}{Rouzbeh.Mohseni@doctoral.uj.edu.pl}}

\ArticleDates{Received February 28, 2021, in final form October 25, 2021; Published online October 29, 2021}
\vspace{1mm}

\Abstract{In this paper, we establish Galois theory for partial differential systems defined over formally real differential fields with a real closed field of constants and over formally $p$-adic differential fields with a $p$-adically closed field of constants. For an integrable partial differential system defined over such a field, we prove that there exists a formally real (resp.~formally $p$-adic) Picard--Vessiot extension. Moreover, we obtain a uniqueness result for this Picard--Vessiot extension. We give an adequate definition of the Galois differential group and obtain a Galois fundamental theorem in this setting. We apply the obtained Galois correspondence to characterise formally real Liouvillian extensions of real partial differential fields with a real closed field of constants by means of split solvable linear algebraic groups. We present some examples of real dynamical systems and indicate some possibilities of further development of algebraic methods in real dynamical systems.}

\vspace{1mm}
\Keywords{real Liouvillan extension; real and $p$-adic Picard--Vessiot theory; split solvable algebraic group; gradient dynamical systems; integrability}

\vspace{1mm}
\Classification{12H05; 37J35; 12D15; 14P05}

\vspace{1mm}
\section{Introduction}
Galois theory for linear differential equations is the differential counterpart of classical Galois theory. The idea of Galois of characterising those polynomial equations solvable by radicals by means of the group of permutations of the roots which preserve the relations between them was paralleled in the work of Picard and Vessiot who characterised linear differential equations solvable by quadratures by means of the group of linear automorphisms of the vector space of solutions that preserve the differential relations between them. In a similar way as the work of Galois was later forma\-li\-zed by Artin, the one of Picard and Vessiot was forma\-li\-zed by using differential algebra, more precisely introducing the notion of a differential field~$K$ to be a~field endowed with a derivation. The constants of $K$ are defined as the elements on which the derivation vanishes and form a subfield $\mathcal{C}$ of $K$. A satisfactory Galois theory for linear differential equations defined over a differential field $K$ with algebraically closed field of constants~$\mathcal{C}$ was established by Kolchin, under the name of Picard--Vessiot theory (see~\cite{Kol0}). For~such a differential equation, Kolchin proved the existence and uniqueness up to $K$-differential isomorphism of a~Picard--Vessiot field, the analog of the splitting field in classical Galois theo\-ry. The differential Galois group is defined as the group of differential automorphisms of the Picard--Vessiot field which fix the base field $K$ and it has the structure of a linear algebraic group defined over the field of constants $\mathcal{C}$. The fundamental theorem of Picard--Vessiot theory establishes a~bijective correspondence between intermediate differential fields of the Picard--Vessiot extension and the closed subgroups of the differential Galois group. We note that the hypothesis that the field of constants $\mathcal{C}$ of $K$ is algebraically closed was essential in the work of Kolchin. His results can be extended to integrable partial differential systems defined over a partial differential field~$K$ with algebraically closed field of constants (see~\cite[Appendix~D]{PS}).\looseness=1

In~\cite{CHP}, T.~Crespo, Z.~Hajto and M.~van der Put show that the condition that the field of constants is algebraically closed may be relaxed. They consider formally real and formally $p$-adic fields, whose definition is given in Section \ref{prelim}.
For a homogeneous linear differential equation defined over a formally real (resp.\ formally $p$-adic) ordinary differential field $K$ with a real closed (resp.\ $p$-adically closed) field of constants $\mathcal{C}_K$, they prove the existence of a formally real (resp.\ formally $p$-adic) Picard--Vessiot extension. Moreover, they obtain a result of uniqueness of the formally real (resp.\ formally $p$-adic) Picard--Vessiot extension up to $K$-differential isomorphism. In model theoretic language, formally real (resp.\ formally $p$-adic) differential fields with a real closed (resp.\ $p$-adically closed) field of constants are instances of differential fields $K$ such that the field of constants $\mathcal{C}_K$ is existentially closed in $K$. Under this hypothesis, the existence of a~Picard--Vessiot extension is proved in~\cite[Theorem~2.2]{GGO}. In~\cite{KP} the results in~\cite{CHP} are generalised to the case when~$\mathcal{C}_K$ is existentially closed in~$K$, large and bounded.

The standard example of a real closed field is the field $\R$ of real numbers whereas the field of real rational functions in one or several variables is an example of a formally real field. Picard--Vessiot theory for formally real differential fields with real closed field of constants allows then to characterise some aspects of the behaviour of real functions. In~\cite{CH} T.~Crespo and Z.~Hajto characterised real Liouvillian extensions of ordinary differential fields in terms of real Picard--Vessiot theory. Their result answers an earlier question of A. Khovanskii, namely ``Is~it true that a necessary and sufficient condition for solvability of a real differential equation by real Liouville functions follows from real Picard--Vessiot theory?'', which originated in the theory developed in~\cite{GK}. For a generalisation and further development of this theory one can consult~\cite{Kh}.\looseness=1

In this paper, we establish Galois theory for partial differential systems over formally real or formally $p$-adic partial differential fields. We prove the existence of a formally real (resp.\ formally $p$-adic) Picard--Vessiot extension for a partial differential system defined over a formally real (resp.\ formally $p$-adic) partial differential field $K$ with a real closed (resp.\ $p$-adically closed) field of constants. We establish as well a uniqueness result of the formally real (resp.\ formally $p$-adic) Picard--Vessiot extension up to $K$-differential isomorphism. We give an adequate definition of the differential Galois group in this setting and prove a Galois correspondence theorem. Due to the fact that real Liouville functions over the field $\R$ of real numbers have very interesting topological properties (see~\cite{GK,Kh}), we restrict to formally real fields in our study of Liouvillianity questions. We characterise formally real Liouvillian extensions for formally real partial differential fields by means of the differential Galois group. Previously, we recall in Section \ref{prelim} the concepts of formally real field, real closed field, formally $p$-adic field and $p$-adically closed field as well as the main definitions and known results of Picard--Vessiot theory. It is worth noting that the algebraic characterisation of real Liouville functions allows us to expect an algebraic version of the theory developed by Gel'fond and Khovanskii in~\cite{GK,Kh}. Similar ideas already appear in Grothendieck's ``Esquisse d'un programme'' (see~\cite[p.~272]{GGA}), where he proposed to consider constant functions with values in the real closure $\overline{\Q}^r$ of the field of rational numbers $\Q$ in the study of integration of Pfaff systems. In the last section of the paper we present some questions related to the integrability of real dynamical systems showing the interest of this further development.\looseness=1


\section{Preliminaries}\label{prelim}

\subsection{Ordered fields, formally real fields}

In this section we recall the concept of formally real field and real closed field. More details on these topics may be found in~\cite{BCR,Pre}.

\begin{Definition} An \emph{ordering} on a field $k$ is a total order relation $\leq$ on $k$ satisfying, for any $x,y,z \in k$,
\begin{enumerate}\itemsep=0pt
\item[$(i)$] $x\leq y \Rightarrow x+z \leq y+z$,
\item[$(ii)$] $0\leq x$ and $0\leq y \Rightarrow 0\leq xy$.
\end{enumerate}

An \emph{ordered field} $(k,\leq)$ is a field $k$ equipped with an ordering $\leq$.
\end{Definition}

\begin{Remark}[{\cite[Example~1.1.2]{BCR}}]\label{ord}
Given an ordered field $(k,\leq)$, there is exactly one ordering on the field of rational functions $k(t)$, extending the one on $k$, such that $t$ is positive and smaller than any positive element in $k$. If $P(t)=a_n t^n+a_{n-1}t^{n-1}+\dots+a_m t^m \in k[t]$, with $a_m \neq 0$, then $P(t) >0 \Leftrightarrow a_m>0$. If $P(t)/Q(t) \in k(t)$, then $P(t)/Q(t)>0 \Leftrightarrow P(t)Q(t)>0$.
\end{Remark}

\begin{Theorem}[{\cite[Theorem~1.1.8]{BCR}}] Let $k$ be a field. The following properties are equivalent:
\begin{enumerate}\itemsep=0pt
\item[$(i)$] $k$ can be ordered,
\item[$(ii)$] $-1$ is not a sum of squares in $k$,
\item[$(iii)$] for $x_1,\dots,x_n \in k$, $\sum_{i=1}^n x_i^2=0 \Rightarrow x_1=\dots =x_n=0$.
\end{enumerate}
\end{Theorem}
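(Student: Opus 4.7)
The plan is to establish the three implications (i) $\Rightarrow$ (iii) $\Rightarrow$ (ii) $\Rightarrow$ (i). The first two are essentially formal manipulations with the order axioms, while the substantial content lies in producing an ordering out of the formal-reality condition via a Zorn's lemma argument on positive cones.

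For (i) $\Rightarrow$ (iii), I would observe that in any ordered field squares are non-negative (since $x\leq 0$ implies $0\leq -x$ hence $0\leq (-x)^2=x^2$), and sums of non-negative elements are non-negative; moreover, a sum $\sum x_i^2=0$ forces each $x_i^2=0$ because otherwise some partial sum would be strictly positive and subtracting the rest would violate order compatibility. For (iii) $\Rightarrow$ (ii), if $-1=\sum x_i^2$ then $1^2+\sum x_i^2=0$, contradicting (iii). Conversely (ii) $\Rightarrow$ (iii): if $\sum x_i^2=0$ with some $x_j\neq 0$, divide by $x_j^2$ to get $-1=\sum_{i\neq j}(x_i/x_j)^2$, contradicting (ii).

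The main work is (ii) $\Rightarrow$ (i). I would let $\Sigma$ denote the set of sums of squares in $k$ (including $0$); it is closed under $+$ and $\cdot$, contains all squares, and by hypothesis $-1\notin\Sigma$. Apply Zorn's lemma to the family of subsets of $k$ containing $\Sigma$, closed under addition and multiplication, and not containing $-1$, ordered by inclusion; let $P$ be a maximal element. I then claim that $P$ is a positive cone, i.e., $P\cup(-P)=k$ and $P\cap(-P)=\{0\}$. The ordering is recovered by $x\leq y\Longleftrightarrow y-x\in P$, and the order axioms are routine from closure of $P$ under $+$ and $\cdot$.

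The main obstacle is proving $P\cup(-P)=k$. Given $a\in k$ with $a\notin P$, consider $P':=\{p+aq:p,q\in P\}$. A short calculation using $a^2\in\Sigma\subseteq P$ shows $P'$ is closed under $+$ and $\cdot$ and strictly contains $P$, so by maximality $-1=p+aq$ with $p,q\in P$. Since $-1\notin P$ one has $q\neq 0$, and then $-a=(1+p)q/q^2=(1+p)q\cdot(1/q)^2\in P$, since $(1/q)^2$ is a square and $P$ is closed under multiplication. A similar (in fact symmetric) argument establishes $P\cap(-P)=\{0\}$: if $a$ and $-a$ both lie in $P$ with $a\neq 0$, then $a\cdot(1/a)^2=1/a\in P$ and likewise $-1/a\in P$, giving $-1=(1/a)\cdot(-a)\in P$, contradiction. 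With these two properties the relation $\leq$ is total, antisymmetric and transitive, and conditions $(i)$ and $(ii)$ of the definition of ordering follow directly from $P+P\subseteq P$ and $P\cdot P\subseteq P$, completing the proof.
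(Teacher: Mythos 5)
The paper does not prove this statement; it is quoted verbatim from Bochnak--Coste--Roy \cite[Theorem~1.1.8]{BCR}, so there is no in-paper argument to compare against. Your proof is correct and is the standard Artin--Schreier argument (the same one used in the cited reference): the implications $(i)\Rightarrow(iii)\Rightarrow(ii)$ are the routine positivity-of-squares manipulations, and the substantive step $(ii)\Rightarrow(i)$ is handled correctly by taking a maximal preordering $P\supseteq\Sigma$ via Zorn's lemma and showing $P\cup(-P)=k$ and $P\cap(-P)=\{0\}$ through the $P'=\{p+aq\}$ extension trick.
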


\begin{Definition} A field satisfying the properties of the preceding theorem is called a \emph{formally real field}.
\end{Definition}

We note that a formally real field always has characteristic $0$.

\begin{Definition} If $k$ is a formally real field, a \emph{formally real extension} of $k$ is a field extension~$\ell/k$ such that $\ell$ is a formally real field. A \emph{real closed field} is a formally real field that has no nontrivial formally real algebraic extension.
\end{Definition}

\begin{Theorem}[{\cite[Theorem~1.2.2]{BCR}}]\label{realcl}
Let $k$ be a field. The following properties are equivalent:
\begin{enumerate}\itemsep=0pt
\item[$(i)$] $k$ is a real closed field,
\item[$(ii)$] $k$ has a unique ordering,
\item[$(iii)$] the ring $k[i]=k[X]/\big(X^2+1\big)$ is an algebraically closed field.
\end{enumerate}
\end{Theorem}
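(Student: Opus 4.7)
My plan is to prove the equivalence by the cycle $(i) \Rightarrow (iii) \Rightarrow (ii) \Rightarrow (i)$, with the Artin--Schreier theorem providing the main content for the first two implications. The key technical lemma used repeatedly is: if $a \in k$ is not a sum of squares in the formally real field $k$, then $k(\sqrt{-a})$ is again formally real. One verifies this by expanding an alleged identity $-1 = \sum (b_j + c_j \sqrt{-a})^2$, comparing the rational and $\sqrt{-a}$ parts to force $\sum b_j c_j = 0$ and $-1 = \sum b_j^2 - a \sum c_j^2$, and then solving for $a$ as a quotient of sums of squares, which by the Brahmagupta--Fibonacci identity is itself a sum of squares, contradicting the assumption.

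For $(i) \Rightarrow (iii)$, the lemma combined with real closedness forces every element of $k$ or its negative to be a sum of squares, and the analogous argument applied to $k(\sqrt{a})$ then shows that sums of squares are squares in $k$. A similar formally-real-preservation argument applied to simple extensions of odd prime degree shows every odd-degree polynomial over $k$ has a root. A Sylow argument on $\operatorname{Gal}(M/k)$ for any finite normal $M/k$ then forces this Galois group to be a $2$-group (the fixed field of a $2$-Sylow has odd degree over $k$ and hence coincides with $k$), and an explicit computation shows every element of $k(i)$ is already a square in $k(i)$ (solving $(x+yi)^2 = a+bi$ reduces to extracting square roots of the nonnegative elements $\bigl(a \pm \sqrt{a^2+b^2}\bigr)/2$). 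This rules out any further quadratic extension of $k(i)$, so $k(i)$ is algebraically closed.

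For $(iii) \Rightarrow (ii)$, the condition that $k[X]/(X^2+1)$ is a field means $-1$ is not a square in $k$, so $[k(i):k]=2$ with $k(i)$ algebraically closed; the Artin--Schreier theorem then yields that $k$ is real closed. Every nonzero element of $k$ is consequently a square or the negative of one, so the set $P$ of nonzero squares is the positive cone of an ordering. Uniqueness follows because any ordering on $k$ must send every nonzero square to a positive value (since $x^2 = (\pm x)^2$), so its positive cone contains $P$, and since $P \cup -P = k^\times$, it must equal $P$.

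The main obstacle is $(ii) \Rightarrow (i)$. I would first show that the positive cone of the unique ordering coincides with the set of nonzero sums of squares: if $a$ is not a sum of squares, then the preordering generated by sums of squares and $-a$ is proper (another application of Brahmagupta--Fibonacci), and Zorn's lemma extends it to an ordering in which $-a$ is positive, which by uniqueness must equal the given ordering. Ruling out a proper algebraic formally real extension $L/k$ then requires a more delicate argument: I plan to examine how orderings extend from $k$ to $L$ and to a real closure, and use the existence of multiple $k$-embeddings of $L$ into that real closure to manufacture a second ordering on $k$ via a trace-type construction, contradicting uniqueness. This last step is where the bulk of the work lies.
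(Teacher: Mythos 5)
The paper does not prove this statement; it is quoted as background from [BCR, Theorem~1.2.2], so the only comparison available is with the literature. Your arguments for $(i)\Rightarrow(iii)$ and $(iii)\Rightarrow(ii)$ are the standard Artin--Schreier ones and are essentially sound. One small slip: in the odd-degree step the induction must run over all odd degrees of irreducible polynomials, not just odd prime degrees, since a degree~$9$ extension need not contain a cubic subextension; the usual argument takes an irreducible $f$ of minimal odd degree $n>1$, writes $-1=\sum g_j^2+fh$ with $\deg h$ odd and $<n$, and derives a contradiction.

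The genuine problem is $(ii)\Rightarrow(i)$, and it is not a gap you can close: with condition $(ii)$ read literally as ``$k$ has a unique ordering'', the implication is false. The field $\Q$ has exactly one ordering (any positive cone contains $1$, hence all positive integers, hence all positive rationals), yet $\Q$ is not real closed and $\Q[i]$ is not algebraically closed. In [BCR, Theorem~1.2.2] condition $(ii)$ is stronger: the ordering must have the set of squares as its set of nonnegative elements \emph{and} every polynomial of odd degree in $k[X]$ must have a root in $k$; the paper's paraphrase drops both requirements. This is also exactly why your plan for the last step cannot succeed: for $k=\Q$ and $L=\Q\big(\sqrt 2\big)$ the two $k$-embeddings of $L$ into $\overline{\Q}^r$ do give two distinct orderings of $L$, but both restrict to the unique ordering of $\Q$, so no ``second ordering on $k$'' can be manufactured by any trace-type device. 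If you restore the full hypothesis of [BCR], then $(ii)\Rightarrow(iii)$ follows by precisely the Sylow-plus-quadratic-closure argument you already wrote for $(i)\Rightarrow(iii)$, since that argument uses only that the nonnegative elements are the squares and that odd-degree polynomials have roots.
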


\begin{examples} $\Q$, $\R$ are formally real fields, $\R$ is a real closed field.
\end{examples}

\begin{Definition} A \emph{real algebraic closure} of an ordered field $(k,\leq)$ is an algebraic field extension~$k^r$ of~$k$ such that $k^r$ is a real closed field and the unique ordering of $k^r$ restricts to $\leq$ on~$k$.
\end{Definition}

\begin{Theorem}[{\cite[Theorem~3.10]{Pre}}] Any ordered field $(k,\leq)$ has a
unique $($up to $k$-isomorphism$)$ real algebraic closure.
\end{Theorem}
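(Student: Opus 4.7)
My plan for existence is a Zorn's-lemma argument inside a fixed algebraic closure. Fix an algebraic closure $\overline{k}$ of $k$ and let $\mathcal{S}$ be the set of pairs $(L,\leq_L)$ with $k\subseteq L\subseteq\overline{k}$ and $\leq_L$ an ordering on $L$ restricting to $\leq$ on $k$, partially ordered by extension. Then $(k,\leq)\in\mathcal{S}$, and any chain admits the obvious upper bound (union of subfields together with union of orderings), so Zorn yields a maximal element $(k^r,\leq_{k^r})$. If $M\subseteq\overline{k}$ were a nontrivial formally real algebraic extension of $k^r$, the classical extension theorem---any ordering on a field extends to any formally real algebraic extension (see \cite{BCR,Pre})---would provide an ordering on $M$ refining $\leq_{k^r}$, contradicting maximality. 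Hence $k^r$ is real closed, and by Theorem \ref{realcl}(ii) its unique ordering is precisely $\leq_{k^r}$, which restricts to $\leq$ on $k$.

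For uniqueness, let $R_1,R_2$ be two real algebraic closures of $(k,\leq)$. Each carries its unique ordering by Theorem \ref{realcl}(ii), restricting to $\leq$ on $k$ by assumption. I would apply Zorn to the set of pairs $(F,\sigma_F)$ where $k\subseteq F\subseteq R_1$ is a subfield and $\sigma_F\colon F\hookrightarrow R_2$ is an order-preserving $k$-embedding, partially ordered by extension. A maximal $(F,\sigma_F)$ must satisfy $F=R_1$: otherwise, taking $\alpha\in R_1\setminus F$ with minimal polynomial $p\in F[X]$, Sturm's theorem provides an order-preserving bijection between the roots $\alpha_1<\dots<\alpha_n$ of $p$ in $R_1$ and the roots $\beta_1<\dots<\beta_n$ of $p^{\sigma_F}$ in $R_2$, since the Sturm sign sequence is intrinsic to the ordered field $F\cong\sigma_F(F)$. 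Sending $\alpha_i\mapsto\beta_i$ then yields an order-preserving extension of $\sigma_F$ to $F(\alpha)$, contradicting maximality. The resulting image $\sigma(R_1)$ is a real closed subfield of $R_2$ over which $R_2$ is algebraic and formally real, so $\sigma(R_1)=R_2$.

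The main obstacle in writing out a self-contained proof is the extension-of-ordering lemma used in the existence step: it rests on the Artin--Schreier characterisation of formally real fields via sums of squares, together with a case analysis for adjunctions of algebraic elements (one must handle separately the adjunction of a square root of a positive element and the adjunction of a root of an odd-degree polynomial). Sturm's theorem, the workhorse of uniqueness, is likewise nontrivial but classical. With these two inputs in hand, the Zorn-based assembly and the final surjectivity check are essentially formal.
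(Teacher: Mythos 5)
The paper itself offers no proof of this statement: it is quoted directly from Prestel as \cite[Theorem~3.10]{Pre} among the preliminaries, so your attempt can only be measured against the standard literature proof (e.g., \cite[Theorem~1.3.2]{BCR}), whose overall architecture --- Zorn's lemma for existence, Zorn plus Sturm's theorem for uniqueness --- you do reproduce.

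There is, however, a genuine gap in your existence half. The ``classical extension theorem'' you invoke --- that any ordering of a field extends to any formally real algebraic extension --- is false. Take $F=\Q\big(\sqrt{2}\big)$ with the ordering in which $\sqrt{2}$ is negative, and $M=F\big(2^{1/4}\big)$ with $2^{1/4}$ the positive real fourth root: then $M$ is formally real (a subfield of $\R$), but $\sqrt{2}=\big(2^{1/4}\big)^2$ is a square in $M$ and hence positive in every ordering of $M$, so no ordering of $M$ restricts to the chosen one on $F$. Consequently, maximality of $(k^r,\leq_{k^r})$ in your poset does not by itself exclude a proper formally real algebraic extension of $k^r$, and ``$k^r$ is real closed'' does not follow as written. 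The correct route is precisely the one you relegate to your closing paragraph: show that a maximal element satisfies (i) every $\leq_{k^r}$-positive element is a square in $k^r$ (an ordering always extends to $F(\sqrt{a})$ for $a>0$) and (ii) every odd-degree polynomial over $k^r$ has a root in $k^r$ (orderings extend along odd-degree extensions), and then apply the Artin--Schreier criterion. In the uniqueness half, Sturm's theorem does give the order-preserving matching of the roots of $p$ and $p^{\sigma_F}$, but you still owe an argument that the resulting field embedding $F(\alpha)\rightarrow R_2$, $\alpha_i\mapsto\beta_i$, is order-preserving on all of $F(\alpha)$ --- this verification, usually done by adjoining square roots of the relevant positive elements before applying the root count, is where the proofs in \cite{BCR} and \cite{Pre} spend most of their effort. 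The remaining Zorn assembly and the final surjectivity of $\sigma$ are fine.
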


\subsection[Valued fields, p-adic fields]{Valued fields, $\boldsymbol p$-adic fields}

In this section we recall the concept of formally $p$-adic field and $p$-adically closed field. More details on these topics may be found in~\cite{PR}.

\begin{Definition} A \emph{valuation} of a field $k$ is a map
\begin{gather*}
v\colon\ k \rightarrow \Gamma \cup \{ \infty \},
\end{gather*}
where $\Gamma$ is a totally ordered abelian group, such that, for all $a$, $b$ in $k$,
\begin{enumerate}\itemsep=0pt
\item[$(1)$] $v(a)=\infty \Leftrightarrow a=0$,
\item[$(2)$] $v(ab)=v(a)+v(b)$,
\item[$(3)$] $v(a+b) \geq \min \{v(a),v(b)\}$, with equality if $v(a)\neq v(b)$.
\end{enumerate}
\end{Definition}

We recall that, for $v$ a valuation of a field $k$, the \emph{valuation ring} $\mathcal{O}$ is defined as $\mathcal{O}:=\{a \in k \colon v(a) \geq 0 \}$ and $\mathcal{O}$ has a unique maximal ideal $\frak{m}:= \{a \in k \colon v(a) > 0 \}$. The \emph{residue field} is then defined as the quotient $\mathcal{O}/\frak{m}$.

\begin{Definition}
Let $p$ be a prime number. A \emph{$p$-valuation} of a field $k$ of characteristic 0 is a~valuation $v$ of $k$
such that $v(p)$ is minimal positive in the value group and the residue field is isomorphic to $\Z/p\Z$.

A \emph{$p$-valued field} $(k,v)$ is a characteristic 0 field $k$ equipped with a $p$-valuation $v$. A \emph{formally $p$-adic field} is a characteristic 0 field which can be endowed with a $p$-valuation.
\end{Definition}

\begin{Remark}\label{val} If $(k,v)$ is a $p$-valued field, the $p$-valuation $v$ may be extended to a $p$-valuation of the field of rational functions $k(t)$ (see~\cite[Example~2.2]{PR}).
\end{Remark}

\begin{Definition} If $k$ is a formally $p$-adic field, a \emph{formally $p$-adic extension} of $k$ is a field extension $\ell/k$ such that $\ell$ is a formally $p$-adic field. A \emph{$p$-adically closed field} is a formally $p$-adic field that has no nontrivial formally $p$-adic algebraic extension.
\end{Definition}

\begin{Example} $\Q_p$ is a $p$-adically closed field.
\end{Example}

\begin{Remark} What we call ``$p$-adic'' is called ``$p$-adic of rank one'' in~\cite{PR}. The case of higher rank $p$-adic fields
can be treated in the same way.
\end{Remark}

\begin{Definition} A \emph{$p$-adic algebraic closure} of a $p$-valued field $(k,v)$ is an algebraic field extension $k^v$ of $k$ such that $k^v$ is a $p$-adically closed field and the valuation of $k$ extends to $k^v$.
\end{Definition}

\begin{Theorem}[{\cite[Corollary~3.11]{PR}}] Any $p$-valued field $(k,v)$ has a
$p$-adic algebraic closure. Let $\ell_1$ and $\ell_2$ be two $p$-adic closures of $k$. Then $\ell_1$ and $\ell_2$ are $k$-isomorphic if and only if
$k\cap \ell_1^n=k\cap \ell_2^n$ for all $n \in \N$ $($where $\ell_i^n:=\{a^n| a\in \ell_i\}$, $i=1,2)$.
\end{Theorem}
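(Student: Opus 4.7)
The argument naturally splits into existence and the isomorphism criterion. For existence, I would apply Zorn's lemma to the poset of pairs $(L,w)$ with $L/k$ algebraic and $w$ a $p$-valuation of $L$ extending $v$, ordered by inclusion of valued fields. Every chain has as upper bound its directed union, which is again algebraic and $p$-valued: both defining conditions of a $p$-valuation — $w(p)$ minimal positive in the value group and residue field $\Z/p\Z$ — survive directed unions, being controlled by finite subextensions. A maximal element admits no proper algebraic formally $p$-adic extension and is, by definition, $p$-adically closed. The forward direction of the isomorphism criterion is also immediate: any $k$-isomorphism $\sigma\colon \ell_1 \to \ell_2$ fixes $k$ and carries $\ell_1^n$ onto $\ell_2^n$, whence $k\cap\ell_1^n=k\cap\ell_2^n$.

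For the converse, assuming $k\cap\ell_1^n=k\cap\ell_2^n$ for every $n$, I would construct a $k$-isomorphism $\ell_1\to\ell_2$ by a Zorn-style back-and-forth. Apply Zorn's lemma to the poset of triples $(K_1,K_2,\sigma)$ where $k\subseteq K_i\subseteq \ell_i$ are intermediate $p$-valued subfields and $\sigma\colon K_1\to K_2$ is a $k$-isomorphism of valued fields matching the power-residue data, that is, $\sigma(K_1\cap\ell_1^n)=K_2\cap\ell_2^n$ for all $n$. The starting triple $(k,k,\Id_k)$ lies in the poset by hypothesis. A maximal triple must satisfy $K_1=\ell_1$ and $K_2=\ell_2$: given $\alpha\in\ell_1\setminus K_1$, one has to produce $\beta\in\ell_2$ and extend $\sigma$ to a valued-field isomorphism $K_1(\alpha)\to K_2(\beta)$ preserving the $n$-th power invariant (and symmetrically for elements of $\ell_2\setminus K_2$). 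Such a $\beta$ is supplied by the Prestel--Roquette embedding theorem for $p$-adically closed fields \cite{PR}.

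The main obstacle is precisely this embedding lemma: deciding, given the minimal polynomial of $\alpha$ transported by $\sigma$, that it acquires a root in $\ell_2$ with the correct valuative and power-residue data. The key analytic input, expressed through Kochen's operator and Hensel's lemma, is that compatibility of $n$-th power classes between $\ell_1$ and $\ell_2$ over $k$ — which propagates to $K_1$ and $K_2$ along the induction — is exactly what governs which irreducible polynomials acquire roots. This embedding lemma constitutes the core of the Prestel--Roquette theory and plays the role for $p$-valued fields that Tarski's transfer principle plays for real closed fields; once it is in hand, the back-and-forth iteration and the verification that the power-residue condition persists at each step are routine.
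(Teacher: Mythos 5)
The paper contains no proof of this statement: it is imported verbatim as \cite[Corollary~3.11]{PR}, so there is no internal argument to compare yours against. Your outline faithfully reproduces the Prestel--Roquette strategy (Zorn's lemma for existence; the trivial forward implication; a back-and-forth on partial valued-field isomorphisms preserving the $n$-th power data for the converse), but note that, like the paper itself, it ultimately defers the entire substance to \cite{PR}: the embedding theorem you invoke to extend a partial isomorphism across an element $\alpha\in\ell_1\setminus K_1$ \emph{is} the main theorem of that theory, so what you have written is a reduction to the literature rather than an independent proof.

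One step that needs tightening is the last sentence of your existence argument. A maximal element $(L,w)$ of your poset admits no proper algebraic extension \emph{to which $w$ extends as a $p$-valuation}; that is Prestel--Roquette's notion of $p$-adically closed, which is relative to the chosen valuation. The definition used in this paper is absolute: $L$ must have no formally $p$-adic algebraic extension at all, and a priori a proper algebraic extension $L'$ of $L$ could carry a $p$-valuation whose restriction to $L$ differs from $w$ (or whose restriction to $k$ differs from $v$), which maximality in your poset does not exclude. To close this you need the additional facts that a maximal $(L,w)$ is Henselian with value group a $\Z$-group and that the $p$-valuation of such a field is unique (its valuation ring is recovered from the field structure via the Kochen operator); only then does ``no extension of $w$'' upgrade to ``no formally $p$-adic algebraic extension''. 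This is again standard material in \cite{PR}, but as written your sentence ``A maximal element admits no proper algebraic formally $p$-adic extension and is, by definition, $p$-adically closed'' elides it.
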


\subsection{Known results on Picard--Vessiot extensions}

Let $K$ be a field endowed with a set $\Delta=\{\partial_1,\dots,\partial_m \}$ of pairwise commuting derivations. Let~$\mathcal{C}_K$ denote the field of constants of $K$. In particular, if $m=1$, $K$ is an ordinary differential field. We consider a differential system
\begin{gather}\label{eq1}
\partial_j Y=A_j Y, \qquad 1\leq j \leq m,
\end{gather}
where $A_j$ is an $(r \times r)$-matrix with entries in $K$, $1\leq j \leq m$.

A \emph{solution} for the system \eqref{eq1} is a column vector $v$ in the vector space $L^r$, for $L$ some differential field extension of $K$, such that $\partial_j v= A_j v$, $1\leq j \leq m$. A \emph{fundamental matrix} for the system \eqref{eq1} is an $r\times r$ invertible matrix~$M$ with entries in some differential field extension~$L$ of~$K$ such that $\partial_j M= A_j M$, $1\leq j \leq m$. If $M \in \GL_r(L)$ is a fundamental matrix for~\eqref{eq1}, then the set of fundamental matrices for \eqref{eq1} with entries in $L$ consists in the matrices of the form~$MC$ for $C \in \GL_r(\mathcal{C}_L)$, for $\mathcal{C}_L$ the constant field of~$L$. We note that, if a fundamental matrix exists for~\eqref{eq1}, then the commutation of the derivatives implies that the matrices $A_j$ satisfy
\begin{gather}\label{eq2}
\partial_j A_i+A_i A_j=\partial_i A_j + A_j A_i, \qquad
1 \leq i,j\leq m.
\end{gather}
The relations (\ref{eq2}) are therefore a necessary condition for the existence of a common solution to the equations in the system \eqref{eq1}. We say that the differential system \eqref{eq1} is \emph{integrable} if the matrices $A_j$ satisfy (\ref{eq2}).

\begin{Definition}\label{PV}
\emph{A Picard--Vessiot extension} for the differential system \eqref{eq1}
is a field extension~$L$ of~$K$ such that:
\begin{enumerate}\itemsep=0pt
\item[$1.$] $L$ is equipped with a set of pairwise commuting derivations extending the ones in $\Delta$.
\item[$2.$] There exists a fundamental matrix $Z$ for \eqref{eq1} with entries in $L$.
\item[$3.$] $L$ is (as a field) generated over K by the entries of $Z$.
\item[$4.$] The field of constants of $L$ is the same as the field of constants of $K$.
\end{enumerate}
\end{Definition}

When $\mathcal{C}_K$ is algebraically closed, a classical result in differential Galois theory states that a~Picard--Vessiot field exists for the system \eqref{eq1} and it is unique up to a $K$-differential isomorphism (see, e.g.,~\cite[Theorems~5.6.5 and~5.6.9]{CHgsm} for the ordinary case and~\cite[Appendix~D]{PS} for the partial case).

In~\cite{CHP}, Crespo, Hajto and van der Put obtained the following theorem on existence and uniqueness of Picard--Vessiot fields for formally real ordinary differential fields with real closed field of constants and for formally $p$-adic ordinary differential fields with $p$-adically closed field of constants.

\begin{Theorem}[{\cite[Theorem~2]{CHP}}] \label{CHP}
Let $K$ be a formally real $($resp.\ formally $p$-adic$)$ ordinary differential field with real closed $($resp.\ $p$-adically closed$)$ field of constants. We consider a differential system
\begin{gather}\label{syso}
Y'=A Y,
\end{gather}
where $A$ is an $(r \times r)$-matrix with entries in $K$.
\begin{enumerate}\itemsep=0pt
\item[$1.$] {\bf \emph{Existence}.} There exists a formally real $($resp.\ formally $p$-adic$)$ Picard--Vessiot extension of~$K$ for~\eqref{syso}, i.e., a~Picard--Vessiot extension of $K$ for \eqref{syso} which is also a formally real $($resp.\ formally $p$-adic$)$ field extension of~$K$.
\item[$2.$] {\bf \emph{Unicity for the real case}.} Let $L_1$, $L_2$ denote two formally real Picard--Vessiot extensions of $K$ for \eqref{syso}. Suppose that $L_1$ and $L_2$ have orderings which induce the same ordering on~$K$. Then $L_1$ and $L_2$ are $K$-differentially isomorphic.
\item[$3.$] {\bf \emph{Unicity for the} $\boldsymbol p$-\emph{adic case}}. Let $L_1$, $L_2$ denote two formally $p$-adic
Picard--Vessiot extensions of $K$ for \eqref{syso}. Suppose that $L_1$ and $L_2$
have $p$-adic closures $L_1^+$ and $L_2^+$ such that the
$p$-valuations of $L_1^+$ and $L_2^+$ induce the same
$p$-valuation
on $K$ and such that $K\cap \big(L_1^+\big)^n=K\cap \big(L_2^+\big)^n$ for every integer $n\geq 2$. Then $L_1$ and $L_2$ are $K$-differentially isomorphic.
\end{enumerate}
\end{Theorem}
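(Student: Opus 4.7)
The plan is to adapt Kolchin's construction of the Picard--Vessiot extension as the field of fractions of a differentially simple quotient of the universal solution algebra $R := K\bigl[X_{ij}, \det(X)^{-1}\bigr]$ (with derivation determined by $X' = AX$), imposing throughout that the quotient carry the required formally real (resp.~formally $p$-adic) structure, and verifying that the field of constants does not grow.

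\textbf{Existence.} I would consider the collection $\mathcal S$ of differential ideals $I\subsetneq R$ such that $R/I$ is a formally real (resp.~formally $p$-adic) $K$-algebra whose constants are exactly $\mathcal{C}_K$. Since the union of a chain of such ideals remains in $\mathcal S$, Zorn's lemma produces a maximal element $P$; a standard argument shows that $P$ is prime, so that $\operatorname{Frac}(R/P)$ is a field having all the required structural properties. The nontrivial step is showing $\mathcal S \neq \emptyset$: for this I would pass to the auxiliary extension $K[i] = K(\sqrt{-1})$ in the real case (respectively, to a suitable algebraic extension of $K$ on which the $p$-valuation lifts), where the new constant field is algebraically closed by Theorem~\ref{realcl}; apply the classical Kolchin construction to obtain a maximal differential ideal $\widetilde M$ over this extension; and then descend by intersecting with $R$. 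A careful bookkeeping of the $\Gal(K[i]/K)$-action (respectively, of the prolongations of the $p$-valuation) is what guarantees that the descended ideal lies in $\mathcal S$.

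\textbf{Uniqueness.} Given two candidate Picard--Vessiot extensions $L_1, L_2$ satisfying the stated compatibility hypothesis on orderings (resp.~on $p$-adic closures and $n$-th power classes), I would form the differential tensor product $L_1 \otimes_K L_2$ and locate a prime differential ideal $\mathfrak q$ such that $D := (L_1 \otimes_K L_2)/\mathfrak q$ is a formally real (resp.~formally $p$-adic) domain with $\mathcal{C}_D = \mathcal{C}_K$. In the real case, the assumed compatibility of the orderings on $L_1$ and $L_2$ extends to a preordering on $L_1 \otimes_K L_2$, and a real-algebraic Nullstellensatz argument then supplies~$\mathfrak q$; in the $p$-adic case the hypothesis $K\cap(L_1^+)^n=K\cap(L_2^+)^n$ is precisely the condition needed to amalgamate the $p$-adic closures and to extract an analogous~$\mathfrak q$. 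Once $\mathfrak q$ is in hand, the two canonical maps $L_i \to \operatorname{Frac}(D)$ are differential $K$-embeddings of Picard--Vessiot extensions into a common overfield, and comparing the fundamental matrices $Z_1$ and $Z_2$ in $\operatorname{Frac}(D)$ forces their ratio to lie in $\GL_r(\mathcal{C}_K)$; this makes the two images inside $\operatorname{Frac}(D)$ coincide and assembles the two embeddings into the required $K$-differential isomorphism $L_1 \cong L_2$.

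\textbf{Main obstacle.} The central difficulty throughout is controlling the field of constants. In the classical setting with $\mathcal{C}_K$ algebraically closed this is automatic from differential simplicity of the Picard--Vessiot ring, but here one must substitute the input that a real closed (resp.~$p$-adically closed) field is existentially closed in every formally real (resp.~formally $p$-adic) extension---the same ingredient underlying \cite[Theorem~2.2]{GGO}---and this is what makes all three parts go through. A secondary delicate point is in the $p$-adic uniqueness: the power-class hypothesis cannot be removed, as it is precisely the condition that permits the $p$-adic closures $L_i^+$ to be compared inside a single overring, mirroring the role played by the compatible orderings in the real case.
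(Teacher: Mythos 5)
This theorem is not proved in the present paper: it is quoted verbatim as Theorem~2 of the cited reference \cite{CHP} and used as a black box for the partial differential results, so there is no in-paper proof to compare against. Measured against the actual argument in \cite{CHP}, your overall strategy is the right one and matches that reference in outline: Zorn's lemma applied to real (resp.\ $p$-adic) differential ideals of $R=K\big[X_{ij},\det(X)^{-1}\big]$ for existence, a real prime differential ideal of $L_1\otimes_K L_2$ compatible with the given orderings (resp.\ valuations and power classes) for uniqueness, and real closedness, i.e., existential closedness, of $\mathcal{C}_K$ to control the constants.

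There are, however, two concrete problems in your existence step. First, by building the condition ``constants exactly $\mathcal{C}_K$'' into the class $\mathcal S$ you both jeopardise the chain condition (constants of the fraction field of a quotient are not obviously preserved under unions of ideals) and forfeit the point of maximality: the correct setup takes $\mathcal S$ to be simply the differential ideals $I$ with $R/I$ formally real (resp.\ formally $p$-adic), for which $\mathcal S\neq\varnothing$ is immediate because $R$ itself is a localization of a polynomial ring over a formally real field and hence formally real; one then proves that maximality of $P$ in this class forces $\mathcal{C}_{\operatorname{Frac}(R/P)}=\mathcal{C}_K$, and that is exactly where real closedness enters. Second, your fallback argument for $\mathcal S\neq\varnothing$ by descending a classical maximal differential ideal $\widetilde M$ from $K[{\rm i}]\big[X_{ij},\det(X)^{-1}\big]$ does not work as stated: the quotient by $\widetilde M$ contains ${\rm i}$ and is never formally real, and intersecting with $R$ need not produce a real ideal. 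For the equation $Y'=0$ with $r=1$ one may take $\widetilde M=(X-{\rm i})$, whose intersection with $K\big[X,X^{-1}\big]$ is $\big(X^2+1\big)$, with quotient $K[{\rm i}]$ --- not formally real. So the ``careful bookkeeping of the Galois action'' you defer to is not bookkeeping; it is the proof. In the uniqueness half, the assertion that the compatible orderings generate a proper preordering on $L_1\otimes_K L_2$ (that is, that $-1$ is not a sum of products of positive elements) is likewise the entire difficulty and is asserted rather than argued, though the remainder of that paragraph (comparing fundamental matrices inside $\operatorname{Frac}(D)$ via a matrix in $\GL_r(\mathcal{C}_K)$) is correct once such a $\mathfrak q$ is produced.
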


\section[Galois theory for partial differential systems over formally real or formally p-adic differential fields]
{Galois theory for partial differential systems \\over formally real or formally $\boldsymbol p$-adic differential fields}

In this section we consider an integrable partial differential system \eqref{eq1} defined over a formally real (resp.\ formally $p$-adic) differential field $K$ with real closed (resp.\ $p$-adically closed) field of constants $\mathcal{C}_K$. Let $\overline{\mathcal{C}_K}$ denote an algebraic closure of $\mathcal{C}_K$. The differential Galois group $G$ for~\eqref{eq1} is a $\overline{\mathcal{C}_K}/\mathcal{C}_K$-form of the differential Galois group $\overline{G}$ for~\eqref{eq1} considered as defined over $\overline{K}:=K\otimes_{\mathcal{C}_K} \overline{\mathcal{C}_K}$, which is a partial differential field with algebraically closed field of constants~$\overline{\mathcal{C}_K}$. However the differential Galois group $G$ for \eqref{eq1} over $K$ gives more information than $\overline{G}$ on the behaviour of the solutions to \eqref{eq1}.
In the case when $K$ is a formally real ordinary differential field and $\mathcal{C}_K$ is real closed, we obtained in~\cite{CH} that the property for a system $Y'=AY$ defined over $K$ to have solutions which are real
Liouville functions is characterised by the differential Galois group $G$. Therefore it is interesting to extend this result to partial differential fields.

\subsection{Picard--Vessiot extensions}

In this section, we will use the approach of Kolchin in~\cite{Kol1} to show how the Picard--Vessiot theory of formally real and formally $p$-adic partial differential fields can be deduced from the ordinary case. Let us note that Kolchin's definition of Picard--Vessiot extension for partial differential fields is different from the one given in Definition \ref{PV}. In~\cite[Theorem 1]{CHJac} the equivalence of both definitions is proved.

\begin{Remark}\label{D} Following Kolchin~\cite{Kol1}, to a partial differential field $k$ with pairwise commu\-ting derivations
$\partial_1, \dots ,\partial_m$ we associate an ordinary differential field $k_D:=k\langle u_1,\dots,u_m \rangle$
with $u_1,\dots,u_m$ independent differential indeterminates, endowed with the derivation $D:=u_1\partial_1+\dots + u_m\partial_m$.
As remarked by Kolchin, $k_D$ and $k$ have the same field of constants.
Let us observe that, as a field, $k_D$ is a purely transcendental extension of $k$. Therefore, if $k$ is a formally real field (resp.\ a formally $p$-adic field), then using induction and Remark \ref{ord} (resp.\ Remark \ref{val}), the ordering (resp.\ the $p$-valuation) of $k$ may be extended to $k_D$, i.e., $k_D$ is a formally real (resp.\ formally $p$-adic) field.
\end{Remark}

We consider now a system of the form \eqref{eq1} defined over a formally real (resp.\ formally $p$-adic) partial differential field. We shall prove the following theorems on the existence and uniqueness of Picard--Vessiot extensions in this setting.

\begin{Theorem} \quad
\begin{enumerate}\itemsep=0pt
\item[$1.$] Let us suppose that $K$ is a formally real partial differential field with real closed field of constants $\mathcal{C}_K$. Then for an integrable differential system \eqref{eq1} defined over $K$, there exists a~formally real differential field $L$ with field of constants $\mathcal{C}_K$ such that $L=K(\{y_{ij}\}_{1\leq i,j\leq r})$, for~$(y_{ij})_{1\leq i,j\leq r}\in \GL_r(L)$ a fundamental matrix for \eqref{eq1}, i.e., $L/K$ is a~Picard--Vessiot extension for \eqref{eq1}.
\item[$2.$] Let us suppose that $K$ is a formally $p$-adic partial differential field with $p$-adically closed field of constants $\mathcal{C}_K$. Then for an integrable differential system \eqref{eq1} defined over $K$, there exists a formally $p$-adic differential field $L$ with field of constants $\mathcal{C}_K$ such that $L=K(\{y_{ij}\}_{1\leq i,j\leq r})$, for $(y_{ij})_{1\leq i,j\leq r}\in \GL_r(L)$ a fundamental matrix for \eqref{eq1}, i.e., $L/K$ is a~Picard--Vessiot extension for \eqref{eq1}.
\end{enumerate}
\end{Theorem}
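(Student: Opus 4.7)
The plan is to reduce to the ordinary case treated in Theorem \ref{CHP} via Kolchin's auxiliary construction from Remark \ref{D}. Form the ordinary differential field $K_D = K\langle u_1,\dots,u_m \rangle$ with derivation $D = u_1\partial_1+\dots+u_m\partial_m$. By Remark \ref{D}, $K_D$ is again formally real (resp.\ formally $p$-adic) and has constant field $\mathcal{C}_K$, so the hypotheses of Theorem \ref{CHP} hold over $K_D$.

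Next, bundle the partial system \eqref{eq1} into a single ordinary system over $K_D$. Setting $A := \sum_{j=1}^m u_j A_j \in K_D^{r\times r}$, the equation $DY = AY$ is an ordinary linear system over $K_D$. Applying Theorem \ref{CHP}(1) produces a formally real (resp.\ formally $p$-adic) ordinary Picard--Vessiot extension $L_D/K_D$ for $DY=AY$, containing a fundamental matrix $Z = (z_{ij}) \in \GL_r(L_D)$, generated over $K_D$ by its entries, and with constant field still equal to $\mathcal{C}_K$.

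The decisive step is to transfer this back to the partial structure. Extend each $\partial_j$ to $K_D$ by $\partial_j(u_i)=0$ (so $D=\sum_j u_j\partial_j$ literally) and consider the $\Delta$-differential polynomial ring $K\{Z,1/\det Z\}$ with $\partial_j Z := A_j Z$; the integrability condition \eqref{eq2} guarantees that the $\partial_j$ so defined pairwise commute. Restricting $D$ to this ring recovers the ordinary system. Using the fact that the $u_j$ are algebraically independent and $\partial_j$-constant, one shows that the maximal $D$-ideal cutting out $L_D$ is actually stable under all $\partial_1,\dots,\partial_m$; hence each $\partial_j$ extends uniquely to $L_D$, the extensions commute, and the fundamental matrix satisfies $\partial_j Z = A_j Z$. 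This is essentially the equivalence between Kolchin's ordinary-style construction and Definition \ref{PV} established in \cite[Theorem~1]{CHJac}, invoked here in the formally real/$p$-adic context.

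Finally, set $L := K(\{z_{ij}\}_{1\le i,j\le r}) \subseteq L_D$. As a subfield of the formally real (resp.\ formally $p$-adic) field $L_D$, the field $L$ is itself formally real (resp.\ formally $p$-adic); it is stable under every $\partial_j$ because $\partial_j Z = A_j Z$ has entries in $L$; and its constants lie in the constants of $L_D$, which coincide with $\mathcal{C}_K$, so $\mathcal{C}_L=\mathcal{C}_K$. Therefore $L/K$ is the desired formally real (resp.\ formally $p$-adic) Picard--Vessiot extension for \eqref{eq1}. I expect the main obstacle to be exactly the descent step: certifying that the maximal differential ideal built in the ordinary theory over $K_D$ is simultaneously stable under all of $\partial_1,\dots,\partial_m$, so that the auxiliary indeterminates $u_j$ can be eliminated and one recovers a bona fide partial Picard--Vessiot field without enlarging the field of constants.
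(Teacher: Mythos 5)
Your reduction to the ordinary case over $K_D$ is the same first step as the paper's, and Theorem \ref{CHP} indeed supplies a formally real (resp.\ formally $p$-adic) ordinary Picard--Vessiot extension $L_D/K_D$ for $DY=A_DY$. The problems begin with the way you set up the return trip. First, the prescription $\partial_j(u_i)=0$ is inconsistent with the construction of Remark \ref{D}: if every $\partial_j$ kills every $u_i$, then $Du_i=\sum_j u_j\partial_j u_i=0$, so the $u_i$ become $D$-constants, the constant field of $(K_D,D)$ contains $\mathcal{C}_K(u_1,\dots,u_m)$ and is no longer real closed (resp.\ $p$-adically closed), and Theorem \ref{CHP} cannot be invoked. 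The $u_i$ are differential indeterminates for the partial structure, so the $\partial_ju_i$ are new transcendentals, not zero.

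Second, and more seriously, your decisive step --- ``one shows that the maximal $D$-ideal cutting out $L_D$ is actually stable under all $\partial_1,\dots,\partial_m$'' --- is essentially the content of the theorem and is asserted rather than proved. Since $[D,\partial_k]=-\sum_j(\partial_ku_j)\partial_j\neq0$, $\Delta$-stability of a $D$-ideal is not formal; moreover the ideal defining $L_D$ in the construction of \cite{CHP} is not an arbitrary maximal $D$-ideal but one chosen compatibly with an ordering (resp.\ a $p$-valuation), so the stability would have to be traced through that construction. Equivalently, what you need is that among the fundamental matrices $ZC$, $C\in\GL_r(\mathcal{C}_K)$, of the ordinary system inside $L_D$ there is one satisfying $\partial_jZ=A_jZ$ for all $j$, and that is precisely the existence statement to be established; \cite[Theorem~1]{CHJac} compares two definitions of partial Picard--Vessiot extension and does not provide this descent. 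The paper closes the gap by a different route: it takes a partial Picard--Vessiot extension $\mathcal{L}/\overline{K}$ over the algebraically closed constants (existence from \cite[Appendix~D]{PS}), uses Kolchin's theorem and ordinary uniqueness over $\overline{\mathcal{C}_K}$ to embed $\mathcal{L}$ into $\overline{L}_D=L_D\otimes_{\mathcal{C}_K}\overline{\mathcal{C}_K}$, and then performs Galois descent for $G=\Gal\big(\overline{\mathcal{C}_K}/\mathcal{C}_K\big)$ on the solution space $V\subset\mathcal{L}$ to obtain a full $\mathcal{C}_K$-rational space $V^G$ of solutions lying in $L_D$; the field $L=K\big\langle V^G\big\rangle$ is then a partial Picard--Vessiot extension of $K$ which, being contained in $L_D$, is formally real (resp.\ formally $p$-adic). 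You would need either to reproduce an argument of this kind or to give an actual proof of the $\Delta$-stability you assert.
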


\begin{proof} First we consider an auxiliary differential system
\begin{gather}\label{aux}
DY=A_D Y,
\end{gather}
where $D=u_1\partial_1+\dots +u_m \partial_m$ and $A_D=u_1A_1+\dots +u_mA_m \in M_{r\times r} (K_D)$, with $u_1,\dots,u_m$ independent differential indeterminates. Since $(K_D,D)$ is a formally real (resp.\ formally $p$-adic) ordinary differential field, with real closed (resp.\ $p$-adically closed) field of constants $\mathcal{C}_K$, by Theo\-rem~\ref{CHP}, there exists a formally real (resp.\ formally $p$-adic) Picard--Vessiot extension~$L_D/K_D$ for the system (\ref{aux}). Let $Z$ denote a fundamental matrix for (\ref{aux}) such that~$L_D$ is generated over~$K_D$ by the entries of~$Z$. Let $\overline{\mathcal{C}_K}$ denote an algebraic closure of $\mathcal{C}_K$. The derivation~$D$ extends uniquely to $\overline{K}_D:=K_D\otimes_{\mathcal{C}_K} \overline{\mathcal{C}_K}$ and $\overline{L}_D:=L_D\otimes_{\mathcal{C}_K} \overline{\mathcal{C}_K}$, with field of constants $\overline{\mathcal{C}_K}$. Since the extensions $\overline{\mathcal{C}_K}/\mathcal{C}_K$ and $K_D/\mathcal{C}_K$ are linearly disjoint, $\overline{K}_D$ and $\overline{L}_D$ are fields and $\overline{L}_D$ is generated over $\overline{K}_D$ by the entries of $Z$. Hence $\overline{L}_D/\overline{K}_D$ is a~Picard--Vessiot extension for (\ref{aux}).

On the other hand, since $\overline{K}:=K\otimes_{\mathcal{C}_K} \overline{\mathcal{C}_K}$ is a partial differential field with algebraically closed field of constants $\overline{\mathcal{C}_K}$, we have a~Picard--Vessiot extension $\mathcal{L}/\overline{K}$ for the system \eqref{eq1} (see~\cite[Appendix~D]{PS}).

The field extensions we have considered up to now are shown in the following diagram:\vspace{1ex}
\begin{gather*}
\xymatrix{ & \mathcal{L}
&& \overline{L}_D \ar@{-}[dd]\\
\overline{K}\ar@{-}[ur]\ar@{-}[rr]\ar@{-}[dd]
&& \overline{K}_D\ar@{-}[ur]\ar@{-}[dd]\\
& & & L_D\\
K\ar@{-}[rr]
&& K_D\ar@{-}[ur]}
\end{gather*}

We define $\mathcal{L}_D:=\mathcal{L}\langle u_1,\dots,u_m \rangle$ and endow it with the derivation $D=u_1\partial_1+\dots +u_m \partial_m$. By~\cite[Theorem~1]{Kol1}, $\mathcal{L}_D$ is a~Picard--Vessiot extension of $\overline{K}_D$ for (\ref{aux}). By the uniqueness of the Picard--Vessiot extension in the ordinary case, $\mathcal{L}_D$ is isomorphic to $\overline{L}_D$ and we can assume $\mathcal{L} \subset \overline{L}_D$.

The Galois group $G:=\Gal(\overline{\mathcal{C}_K}/\mathcal{C}_K)$ acts on the field $\overline{L}_D=L_D\otimes_{\mathcal{C}_K} \overline{\mathcal{C}_K}$ by acting on the second factor. We note that this action commutes with the derivation $D$ and is continuous. Let~$V$ be the $\overline{\mathcal{C}_K}$-vector space of solutions to \eqref{eq1} contained in $\mathcal{L}$. By restricting the action of $G$ on $\overline{L}_D$ to $V$ we obtain a continuous semi-linear action. Let $V^G:=\{ v \in V \mid \sigma(v)=v, \, \forall \sigma \in G \}$. Clearly $V^G$ is a $\mathcal{C}_K$-subspace of the $\mathcal{C}_K$-vector space $V$. We want to show that the $\mathcal{C}_K$-vector space $V^G$ has dimension equal to the dimension of $V$ over $\overline{\mathcal{C}_K}$.

Let $v_1,\dots,v_d$ be a $\mathcal{C}_K$-basis of $V^G$. We shall prove first that $v_1,\dots,v_d$ are linearly independent over $\overline{\mathcal{C}_K}$. Assume otherwise and let $a_1v_1+\dots +a_dv_d=0$, with $a_i \in \overline{\mathcal{C}_K}$, $1 \leq i \leq d$ be a dependence relation with a minimal number of non-zero coefficients. We may assume $a_1 \neq 0$ and, by scaling, $a_1=1$. By applying $\sigma \in G$ to the dependence relation and subtracting the obtained relation from the first one, we obtain from the minimality condition $\sigma(a_i)-a_i=0$ for $2\leq i \leq d$. Since $\sigma$ is any element in $G$, we obtain that the coefficients $a_i$ belong to $\mathcal{C}_K$. This gives a contradiction, hence $v_1,\dots,v_d$ are $\overline{\mathcal{C}_K}$-linearly independent.

In order to prove that $v_1,\dots,v_d$ generate $V$ over $\overline{\mathcal{C}_K}$, we shall see that a vector $v \in V$ can be written as a linear combination of vectors in $V^G$, with coefficients in $\overline{\mathcal{C}_K}$. In the real case, by Theorem \ref{realcl}, we have $\overline{\mathcal{C}_K}=\mathcal{C}_K({\rm i})$, where ${\rm i}=\sqrt{-1}$, and $G=\{ \Id,c \}$ with $c$ defined by $c({\rm i})=-{\rm i}$. We may then write $v=(1/2)(v+c(v))+{\rm i} \, (1/2{\rm i})(v-c(v))$ and $(1/2)(v+c(v)),(1/2{\rm i})(v-c(v))$ belong to $V^G$. In the $p$-adic case, $G$ is a pro-finite group. For $v \in V \subset \overline{L}_D=L_D\otimes_{\mathcal{C}_K} \overline{\mathcal{C}_K}$, there exists a finite extension $\widetilde{\mathcal{C}}/\mathcal{C}$ such that $v \in L_D\otimes_{\mathcal{C}_K} \widetilde{\mathcal{C}}$ and we may assume $\widetilde{\mathcal{C}}/\mathcal{C}_K$ Galois. Let $n:=\big[\widetilde{\mathcal{C}}\colon \mathcal{C}_K\big]$ and $\widetilde{G}:=\Gal\big(\widetilde{\mathcal{C}}/\mathcal{C}_K\big)=\{\sigma_1,\dots,\sigma_n\}$. By the linear independence of characters and the equality of dimensions, the map
\begin{gather*}
\widetilde{\mathcal{C}}\otimes_{\mathcal{C}_K} \widetilde{\mathcal{C}} \rightarrow \widetilde{\mathcal{C}}^{n}, \qquad
x\otimes y \mapsto (x\sigma_1(y),\dots,x\sigma_n(y))
\end{gather*}
is an isomorphism. We may then find elements $x_1,\dots,x_n,y_1,\dots,y_n$ in $\widetilde{\mathcal{C}}$ such that
\begin{gather*}
\sum_{i=1}^n x_i y_i =1, \qquad
\sum_{i=1}^n x_i \sigma(y_i) =0,\qquad \text{for}\quad \sigma \in \widetilde{G},\quad \sigma \neq \Id.
\end{gather*}
We have then $v=\sum_{\sigma \in \widetilde{G}} \sum_{i=1}^n x_i \sigma(y_i) \sigma(v)= \sum_{i=1}^n x_i \big(\sum_{\sigma \in \widetilde{G}} \sigma(y_i v)\big)$ and $\sum_{\sigma \in \widetilde{G}} \sigma(y_i v) \in V^G$, for $i=1,\dots, n$.

We have then obtained that the $\mathcal{C}_K$-vector space $V^G$ has dimension $r=\dim_{\overline{\mathcal{C}_K}} V$. Now $V^G$ is clearly a vector space of solutions to \eqref{eq1}. Let us define $L:=K\big\langle V^G \big\rangle$ the subfield of $\mathcal{L}$ generated by the elements in $V^G$. By construction, $L$ is a~Picard--Vessiot extension of~$K$ for~\eqref{eq1}. Since $L\subset L_D$, the field $L$ is a formally real (resp.\ formally $p$-adic) field.

The complete diagram of field extensions is as follows:
\begin{gather*}
\begin{split}
&\xymatrix{ & \mathcal{L}\ar@{-}[rr]\ar@{-}'[d][dd]
&& \mathcal{L}_D \simeq \overline{L}_D\ar@{-}[dd]\\
\overline{K}\ar@{-}[ur]\ar@{-}[rr]\ar@{-}[dd]
&& \overline{K}_D\ar@{-}[ur]\ar@{-}[dd]\\
& L\ar@{-}'[r][rr] & & L_D\\
K\ar@{-}[rr]\ar@{-}[ur]
&& K_D\ar@{-}[ur]}\end{split}
\tag*{\qed}
\end{gather*}
\renewcommand{\qed}{}
\end{proof}

\begin{Theorem}\label{realuni} Let $L_1$, $L_2$ denote two formally real Picard--Vessiot extensions of $K$ for \eqref{eq1}. Suppose that $L_1$ and $L_2$ have orderings which induce the same ordering on $K$. Then $L_1$ and $L_2$ are $K$-differentially isomorphic.
\end{Theorem}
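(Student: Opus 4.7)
The plan is to reduce to the ordinary-differential case of Theorem~\ref{CHP}, exactly as in the existence proof above. Put $K_D = K\langle u_1,\dots,u_m\rangle$ and $L_{i,D} = L_i\langle u_1,\dots,u_m\rangle$ for $i=1,2$, all endowed with the single derivation $D = u_1\partial_1+\dots+u_m\partial_m$, and consider the auxiliary system $DY = A_D Y$ with $A_D = u_1A_1+\dots+u_mA_m$. A fundamental matrix $Z_i \in \GL_r(L_i)$ for \eqref{eq1} automatically satisfies $DZ_i = A_D Z_i$, so by \cite[Theorem~1]{Kol1} the extension $L_{i,D}/K_D$ is a Picard--Vessiot extension for $DY = A_D Y$, with field of $D$-constants equal to $\mathcal{C}_K$ by Remark~\ref{D}.

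Next I would transfer the orderings. As a field, $L_{i,D}$ is a purely transcendental extension of $L_i$ by the countable family $\{D^n u_k\}$, so iterating Remark~\ref{ord} produces an ordering $\widetilde{\leq}_i$ on $L_{i,D}$ extending the given one on $L_i$ in which each newly adjoined indeterminate is positive and smaller than every positive element of the preceding field; in particular $L_{i,D}$ is formally real. The key observation is that, because $K \subset L_i$, every such new indeterminate is also positive and smaller than every positive element of $K$, so by the uniqueness assertion in Remark~\ref{ord} the restriction of $\widetilde{\leq}_i$ to $K_D$ is the canonical extension of the ordering of $K$ to $K_D$. Since $\leq_1$ and $\leq_2$ induce the same ordering on $K$ by hypothesis, $\widetilde{\leq}_1$ and $\widetilde{\leq}_2$ induce the same ordering on $K_D$.

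Part~2 of Theorem~\ref{CHP} then produces a $K_D$-differential isomorphism $\phi\colon L_{1,D}\to L_{2,D}$. It remains to descend $\phi$ to a $K$-differential isomorphism $L_1\to L_2$. Both $\phi(Z_1)$ and $Z_2$ are fundamental matrices of $DY = A_D Y$ over $L_{2,D}$, whose field of $D$-constants equals $\mathcal{C}_K$, so $\phi(Z_1) = Z_2 C$ for some $C \in \GL_r(\mathcal{C}_K)$; consequently $\phi(L_1) = K(\phi(Z_1)) = K(Z_2 C) = L_2$. Compatibility with each $\partial_j$ is checked on the generators: since $\partial_j C = 0$,
\begin{gather*}
\phi(\partial_j Z_1) = \phi(A_j Z_1) = A_j \phi(Z_1) = A_j Z_2 C = (\partial_j Z_2) C = \partial_j(Z_2 C) = \partial_j \phi(Z_1),
\end{gather*}
and this extends to all of $L_1 = K(Z_1)$ because $\phi$ fixes $K$. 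The main obstacle I foresee is the ordering step: one must verify carefully that the canonical inductive extension of orderings from $L_i$ to $L_{i,D}$ really does restrict to an ordering on $K_D$ that does not depend on $i$, as this is precisely what allows the ordinary case of Theorem~\ref{CHP} to be invoked.
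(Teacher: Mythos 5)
Your proposal is correct and follows essentially the same route as the paper: reduce to the auxiliary ordinary system $DY=A_DY$ over $K_D$, extend the orderings so that they agree on $K_D$, invoke the uniqueness part of Theorem~\ref{CHP} to get a $K_D$-differential isomorphism, and restrict it to $L_1$ using $\phi(Z_1)=Z_2C$ with $C\in\GL_r(\mathcal{C}_K)$. Your treatment of the ordering-compatibility and descent steps is somewhat more detailed than the paper's (the only cosmetic slip is describing the transcendence basis of $L_{i,D}$ over $L_i$ as $\{D^nu_k\}$ rather than the partial derivatives $\{\partial^\alpha u_k\}$, which does not affect the argument).
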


\begin{proof}
 We have that $(L_1)_D$ and $(L_2)_D$ are two formally real Picard--Vessiot extensions of $K_D$ for the system (\ref{aux}). We extend the ordering from $K$ to $K_D$ and from $L_i$ to $(L_i)_D$, $i=1,2$, in such a way that the ordering of $(L_i)_D$ restricts to the ordering in $K_D$, $i=1,2$ (see Remark~\ref{D}). By Theorem \ref{CHP}, we have a differential $K_D$-isomorphism
\begin{gather*}
\varphi\colon\ L_1\langle u_1,\dots,u_m \rangle \rightarrow L_2\langle u_1,\dots,u_m \rangle.
\end{gather*}

If $L_1=K(y_{ij})$ and $L_2=K\big(\widetilde{y}_{ij}\big)$ for fundamental matrices $(y_{ij})$ and $\big(\widetilde{y}_{ij}\big)$ of the system~\eqref{eq1}, then $(\varphi(y_{ij}))_{1\leq i,j\leq r}$ is a fundamental matrix of \eqref{eq1} which gives $(\varphi(y_{ij}))=\big(\widetilde{y}_{ij}\big) C$, with $C \in M_{r\times r} (\mathcal{C}_K)$. Therefore $\varphi$ restricts to a differential $K$-isomorphism
\begin{gather*}
\varphi_{|L_1}\colon\ L_1 \rightarrow L_2. \tag*{\qed}
\end{gather*}
\renewcommand{\qed}{}
\end{proof}

\begin{Theorem} Let $L_1$, $L_2$ denote two formally $p$-adic Picard--Vessiot extensions of $K$ for \eqref{eq1}. Suppose that $L_1$ and $L_2$
have $K$-isomorphic $p$-adic closures $L_1^+$ and $L_2^+$. Then $L_1$ and $L_2$ are $K$-differentially isomorphic.
\end{Theorem}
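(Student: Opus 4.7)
The plan is to imitate the proof of Theorem \ref{realuni}, now invoking part~3 of Theorem~\ref{CHP} in place of part~2. As before, form the ordinary differential fields $(L_i)_D := L_i\langle u_1,\dots,u_m\rangle$ for $i=1,2$, endowed with the derivation $D=u_1\partial_1+\dots+u_m\partial_m$. By \cite[Theorem~1]{Kol1}, $(L_i)_D$ is a Picard--Vessiot extension of $K_D$ for the auxiliary ordinary differential system \eqref{aux}. By (iterated application of) Remark~\ref{val}, the $p$-valuation of $L_i$ extends canonically to $(L_i)_D$, and restricts to the canonical extension of the $p$-valuation of $K$ to $K_D$; in particular $(L_i)_D$ is a formally $p$-adic Picard--Vessiot extension of $K_D$ for \eqref{aux}.

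The core technical step is to produce $p$-adic closures of $(L_1)_D$ and $(L_2)_D$ satisfying the hypotheses of Theorem~\ref{CHP}(3). Let $\psi\colon L_1^+\to L_2^+$ be the given $K$-isomorphism of $p$-adic closures. Extend the $p$-valuation of each $L_i^+$ canonically to $L_i^+\langle u_1,\dots,u_m\rangle$ (again by Remark~\ref{val}), and choose a $p$-adic closure $M_i$ of this field. Because $\psi$ is a $K$-isomorphism of $p$-valued fields and the extensions to $L_i^+\langle u_1,\dots,u_m\rangle$ are the canonical ones, $\psi$ lifts uniquely to a $K_D$-isomorphism of valued fields $L_1^+\langle u_1,\dots,u_m\rangle \to L_2^+\langle u_1,\dots,u_m\rangle$ sending $u_j\mapsto u_j$, and this in turn extends to a $K_D$-isomorphism $\widetilde{\psi}\colon M_1\to M_2$ of $p$-adic closures. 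Since $M_i$ contains $(L_i)_D$, it serves as a $p$-adic closure of $(L_i)_D$. The existence of the $K_D$-isomorphism $\widetilde{\psi}$ implies that $M_1$ and $M_2$ induce the same $p$-valuation on $K_D$ and that $K_D\cap M_1^n=K_D\cap M_2^n$ for every integer $n\geq 2$ (the latter is automatic from any $K_D$-isomorphism: if $x=m^n\in K_D\cap M_1^n$ then $x=\widetilde{\psi}(x)=\widetilde{\psi}(m)^n\in M_2^n$).

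Applying Theorem~\ref{CHP}(3) to $(L_1)_D$ and $(L_2)_D$ over $K_D$, we obtain a $K_D$-differential isomorphism
\begin{gather*}
\varphi\colon\ L_1\langle u_1,\dots,u_m\rangle\rightarrow L_2\langle u_1,\dots,u_m\rangle.
\end{gather*}
As in the proof of Theorem~\ref{realuni}, writing $L_1=K(y_{ij})$ and $L_2=K(\widetilde{y}_{ij})$ for fundamental matrices of \eqref{eq1}, the matrix $(\varphi(y_{ij}))$ is again a fundamental matrix of \eqref{eq1}, so $(\varphi(y_{ij}))=(\widetilde{y}_{ij})C$ with $C\in\GL_r(\mathcal{C}_K)$. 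Hence the entries $\varphi(y_{ij})$ lie in $L_2$ and $\varphi$ restricts to a differential $K$-isomorphism $L_1\to L_2$.

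The main obstacle is the second paragraph: the hypothesis only provides a $K$-isomorphism between $p$-adic closures of $L_1$ and $L_2$, whereas Theorem~\ref{CHP}(3) requires compatible $p$-adic closures of the larger fields $(L_1)_D$ and $(L_2)_D$. Everything rests on the fact that the canonical extension of a $p$-valuation to a purely transcendental extension is functorial in valued-field isomorphisms, so that $\psi$ lifts to the $u$-variables and then to $p$-adic closures; the $n$-th power condition on $K_D$ is then an automatic consequence of having an actual $K_D$-isomorphism of the two $p$-adic closures.
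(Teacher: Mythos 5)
Your proof is correct and follows essentially the same route as the paper's: reduce to the ordinary system \eqref{aux} over $K_D$, build compatible $p$-adic closures of $(L_1)_D$ and $(L_2)_D$ from the given $K$-isomorphism $L_1^+\to L_2^+$, and then apply Theorem~\ref{CHP}(3) followed by the restriction argument of Theorem~\ref{realuni}. The only (cosmetic) difference is that the paper sidesteps your appeal to functoriality of the canonical extension of the $p$-valuation: it extends the valuation only to $\big(L_1^+\big)_D$ and \emph{transfers} it to $\big(L_2^+\big)_D$ through the lifted isomorphism, so that valuation-preservation holds by construction.
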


\begin{proof} As $u_1,\dots,u_m$ are independent differential indeterminates, the $K$-isomorphism from $L_1^+$ to $L_2^+$ may be extended to a field isomorphism from $\big(L_1^+\big)_D$ to $\big(L_2^+\big)_D$ over~$K_D$. Now we may extend the $p$-adic valuation to $\big(L_1^+\big)_D$ and define a $p$-valuation on $\big(L_2^+\big)_D$ by transferring to it the valuation of $\big(L_1^+\big)_D$ by means of the isomorphism.
Next we choose a $p$-adic closure $M_1$ of~$\big(L_1^+\big)_D$. We can extend the isomorphism from $\big(L_1^+\big)_D$ to $\big(L_2^+\big)_D$ to an embedding of $M_1$ into an algebraic closure of $\big(L_2^+\big)_D$. Its image, equipped with the valuation induced by the valuation of $\big(L_1^+\big)_D$ via the embedding is a $p$-adic closure $M_2$ of $\big(L_2^+\big)_D$. Now $M_1$ and $M_2$ are isomorphic as valued fields over $K_D$ and are $p$-adic closures of $(L_1)_D$ and $(L_2)_D$. By Theorem~\ref{CHP}, we have a differential $K_D$-isomorphism
\begin{gather*}
\varphi\colon\ L_1\langle u_1,\dots,u_m \rangle \rightarrow L_2\langle u_1,\dots,u_m \rangle.
\end{gather*}

The rest of the proof is like for Theorem \ref{realuni}.
\end{proof}

\subsection{Galois correspondence}

In the case when the field of constants of the differential field $K$ is algebraically closed, the dif\-fe\-ren\-tial Galois group of a~Picard--Vessiot extension $L/K$ is defined as the group of $K$-differential automorphisms of $L$. The next example illustrates that, when the field of constants is not algebraically closed, the group of $K$-differential automorphisms of $L$ may be ``too small'' to obtain a satisfactory differential Galois theory.

\begin{Example} Let $K:=\R(t_1,t_2)$ be the field of rational functions in the variables $t_1$, $t_2$ over the field $\R$ of real numbers endowed with the usual derivations $\partial_1:= \partial /\partial t_1$ and $\partial_2:= \partial /\partial t_2$. Clearly the field of constants of $K$ is $\R$. We consider the differential system
\begin{gather}\label{ex}
\partial_1 Y= t_2 \, Y, \qquad \partial_2 Y= t_1 \, Y,
\end{gather}

\noindent
defined over $K$. A solution to the system (\ref{ex}) is ${\rm e}^{t_1t_2}$, hence $L:=K\big({\rm e}^{t_1t_2}\big)$ is a formally real Picard--Vessiot extension of $K$ for (\ref{ex}). If $\sigma$ is a $K$-differential automorphism of $L$, we have $\sigma\big({\rm e}^{t_1t_2}\big)=\lambda {\rm e}^{t_1t_2}$, with $\lambda \in \R\setminus \{0\}$ and composing two such automorphisms amounts to multiplying the corresponding factors $\lambda$. Hence the group $\DAut_K L$ of $K$-differential automorphisms of $L$ is isomorphic to the multiplicative group of $\R$. We consider now the intermediate field $F:=K\big({\rm e}^{3t_1t_2}\big)$. An $F$-differential automorphism $\tau$ of $L$ is given by $\tau\big({\rm e}^{t_1t_2}\big)=\lambda {\rm e}^{t_1t_2}$, with $\lambda^3=1$, since $\tau$ must fix ${\rm e}^{3t_1t_2}$. Hence the group of $F$-differential automorphisms of $L$ is trivial. The subgroups of $\DAut_K L$ corresponding to $F$ and $L$ by the Galois correspondence will then be equal whereas $F$ and $L$ are not equal.
\end{Example}

Let $K$ be a formally real (resp.\ formally $p$-adic) partial differential field with real closed (resp.\ $p$-adically closed)
field of constants $\mathcal{C}_K$ and let $L/K$ be a Picard--Vessiot extension. Let $\overline{\mathcal{C}_K}$ denote an algebraic closure of $\mathcal{C}_K$ and consider the fields $\overline{K}:=K\otimes_{\mathcal{C}_K} \overline{\mathcal{C}_K}$ and $\overline{L}:=L\otimes_{\mathcal{C}_K} \overline{\mathcal{C}_K}$. As
in the ordinary case (see~\cite{CHSA}), we shall consider the set
$\DHom_K\big(L,\overline{L}\big)$ of $K$-differential morphisms from $L$ into
$\overline{L}$ and transfer the group structure from $\DAut_{\overline{K}} \overline{L}$ to
$\DHom_K\big(L,\overline{L}\big)$ by means of the bijection
\begin{align*}
\DAut_{\overline{K}} \overline{L} &\ \rightarrow \ \DHom_K\big(L,\overline{L}\big),
\\
\tau &\ \mapsto \ \tau_{|L}.
\end{align*}

\begin{Definition} For the Picard--Vessiot extension $L/K$ we define the differential Galois group as the set $\DHom_K\big(L,\overline{L}\big)$ endowed with the group structure given above and denote it by $\DGal(L|K)$.
\end{Definition}

\begin{Proposition}\label{groups}
The map $\DGal(L_D|K_D) \rightarrow \DGal(L|K)$ which to each
$K_D$-differential morphism from $L_D$ into $\overline{L}_D=L_D\otimes_{\mathcal{C}_K} \overline{\mathcal{C}_K}$
assigns its restriction to $L$ is an isomorphism of groups.
\end{Proposition}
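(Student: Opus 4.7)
The plan is to verify well-definedness, the group-homomorphism property, injectivity, and surjectivity for the restriction map $\Phi\colon \DGal(L_D|K_D) \to \DGal(L|K)$, with the last two items reduced to the corresponding statement for the restriction $\Psi\colon \DAut_{\overline{K}_D}\overline{L}_D \to \DAut_{\overline{K}}\overline{L}$, which ultimately rests on Kolchin's ordinary-to-partial correspondence (\cite[Theorem~1]{Kol1}) applied over the algebraically closed field of constants $\overline{\mathcal{C}_K}$. The key structural observation used throughout is that any fundamental matrix $Y=(y_{ij})\in \GL_r(L)$ of \eqref{eq1} is automatically a fundamental matrix of the auxiliary system \eqref{aux}, because
\begin{gather*}
DY=\sum_{j=1}^m u_j\partial_j Y=\sum_{j=1}^m u_j A_j Y=A_D Y,
\end{gather*}
and consequently $L_D=L\cdot K_D=K_D(Y)$ and $\overline{L}_D=\overline{K}_D(Y)$ as fields.

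For well-definedness, take $\sigma\in\DHom_{K_D}(L_D,\overline{L}_D)$. Then $\sigma(Y)$ is another fundamental matrix of \eqref{aux} in $\overline{L}_D$, whose $D$-constants coincide with $\overline{\mathcal{C}_K}$, so $\sigma(Y)=YC$ for some $C\in \GL_r(\overline{\mathcal{C}_K})$; in particular $\sigma(L)=K(YC)\subseteq \overline{L}$. The identity $\partial_j\sigma(Y)=A_j YC=\sigma(A_j Y)=\sigma(\partial_j Y)$ together with $\sigma|_K=\Id_K$ and the Leibniz rule yields that $\sigma|_L$ commutes with each $\partial_j$ on $L$, so $\sigma|_L\in\DHom_K(L,\overline{L})=\DGal(L|K)$. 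The same computation applied to $\tilde\sigma\in\DAut_{\overline{K}_D}\overline{L}_D$ shows $\tilde\sigma(\overline{L})=\overline{L}$, defining $\Psi$, which is a group homomorphism since restriction respects composition. For $\sigma=\tilde\sigma|_{L_D}$ the automorphism $\tilde\sigma|_{\overline{L}}$ restricts to $\sigma|_L$ on $L$, hence coincides with the unique lift of $\sigma|_L$ under the bijection $\DAut_{\overline{K}}\overline{L}\leftrightarrow\DHom_K(L,\overline{L})$; this shows that $\Phi$ corresponds to $\Psi$ under the $\DAut\leftrightarrow\DHom$ bijections and is itself a group homomorphism.

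Injectivity of $\Phi$ is immediate: if $\sigma|_L=\Id_L$ then $\sigma(Y)=Y$, and since $L_D=K_D(Y)$ with $\sigma|_{K_D}=\Id$ we get $\sigma=\Id_{L_D}$. Surjectivity is the main obstacle and is supplied by Kolchin's theorem (\cite[Theorem~1]{Kol1}): since $\overline{\mathcal{C}_K}$ is algebraically closed, $\overline{L}/\overline{K}$ is a classical Picard--Vessiot extension for \eqref{eq1} and $\overline{L}_D/\overline{K}_D$ is the associated ordinary Picard--Vessiot extension for \eqref{aux}, and Kolchin's result identifies their differential Galois groups via restriction, i.e.\ $\Psi$ is an isomorphism. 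Given $\tau\in\DGal(L|K)$ with lift $\tilde\tau\in\DAut_{\overline{K}}\overline{L}$, this produces $\tilde\sigma\in\DAut_{\overline{K}_D}\overline{L}_D$ with $\tilde\sigma|_{\overline{L}}=\tilde\tau$, and then $\sigma:=\tilde\sigma|_{L_D}\in\DGal(L_D|K_D)$ satisfies $\sigma|_L=\tau$, completing the proof that $\Phi$ is an isomorphism.
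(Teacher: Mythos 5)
Your proof is correct, and its core step --- restricting $\sigma$ to $L$ by observing that $\sigma(Y)=YC$ with $C\in\GL_r\big(\overline{\mathcal{C}_K}\big)$, so that $\sigma(L)\subseteq\overline{L}$ --- is exactly the paper's argument. You diverge in two places. For the commutation of $\sigma_{|L}$ with each $\partial_j$, the paper compares $\sigma(Dx)=\sum_i u_i\sigma(\partial_i x)$ with $D(\sigma(x))=\sum_i u_i\partial_i(\sigma(x))$ and equates coefficients using the algebraic independence of the $u_i$, whereas you verify commutation on the generators $y_{ij}$ and propagate it through $L=K(\{y_{ij}\})$ by the Leibniz and quotient rules; both are standard and equally valid. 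For surjectivity the paper simply asserts that a $K$-differential morphism $L\to\overline{L}$ extends to a $K_D$-differential morphism $L_D\to\overline{L}_D$ (which is immediate from $L_D=K_D\cdot L=K_D(Y)$ with the $u_i$ and their derivatives algebraically independent over $L$), while you lift to $\DAut_{\overline{K}}\overline{L}$, invoke Kolchin's identification of the differential Galois groups of $\overline{L}/\overline{K}$ and $\overline{L}_D/\overline{K}_D$ over the algebraically closed constants, and restrict back down. That is heavier machinery for the same conclusion, but it has the merit of making explicit the compatibility of the restriction map with the $\DAut\leftrightarrow\DHom$ bijections --- and hence the group-homomorphism property, which the paper leaves implicit. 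Your injectivity argument ($\sigma$ is determined on $L_D=K_D(Y)$ by its values on $K_D$ and on $Y$) likewise fills in a point the paper does not spell out.
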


\begin{proof} Since $L/K$ is a~Picard--Vessiot extension, we have that $L=K\big( \{y_{ij}\}_{1\leq i,j \leq r} \big)$, for $(y_{ij})_{1\leq i,j \leq r}$ a fundamental matrix for some differential system $S$ defined over $K$. If $\sigma$ is a~$K_D$-differential morphism from $L_D$ into
$\overline{L}_D$, then $\sigma((y_{ij}))$ is a fundamental matrix for $S$. Therefore there exists an invertible matrix $C$ with entries in the field of constants $\overline{\mathcal{C}_K}$ of $\overline{L}_D$ such that $\sigma((y_{ij}))=(y_{ij})C$, hence $\sigma((y_{ij}))$ has entries in $\overline{L}$. By restricting $\sigma$ to $L$, we obtain then a~$K$-morphism from $L$ to $\overline{L}$. For an element $x$ in $L$, we have $Dx=\sum_{i=1}^m u_i \partial_i x$ and $\sigma (Dx)=\sum_{i=1}^m u_i \sigma(\partial_i x)$, since $u_i \in K_D$, $1\leq i \leq m$. On the other hand $D(\sigma(x))=\sum_{i=1}^m u_i \partial_i (\sigma (x))$. Since the elements $u_i$ are algebraically independent, we obtain that $\sigma_{|L}$ commutes with $\partial_i$, $1 \leq i \leq m$. Hence $\sigma_{|L}$ is a $K$-differential morphism from $L$ to $\overline{L}$. Reciprocally, a $K$-differential morphism from $L$ to $\overline{L}$ may be extended to a $K_D$-differential morphism from $L_D$ to $\overline{L_D}$.
\end{proof}

\begin{Theorem}
The differential Galois group $\DGal(L|K)$ is a $\mathcal{C}_K$-defined closed subgroup of some general linear group defined over $\overline{\mathcal{C}_K}$, i.e., a linear algebraic group defined over $\mathcal{C}_K$. The $\mathcal{C}_K$-valued points of $\DGal(L|K)$ correspond to the $K$-differential automorphisms of $L$.
\end{Theorem}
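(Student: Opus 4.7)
The plan is to reduce to the ordinary case via Proposition \ref{groups} and transport the algebraic-group structure. The ordinary analogue, asserting that $\DGal(L_D|K_D)$ is a $\mathcal{C}_K$-defined closed subgroup of some $\GL_r$ over $\overline{\mathcal{C}_K}$ whose $\mathcal{C}_K$-points coincide with $\DAut_{K_D} L_D$, has been established for formally real (resp.\ formally $p$-adic) ordinary differential fields with real closed (resp.\ $p$-adically closed) field of constants in \cite{CHP,CHSA}. Since Proposition \ref{groups} gives a group isomorphism $\DGal(L_D|K_D)\to \DGal(L|K)$ by restriction, transporting the structure along it endows $\DGal(L|K)$ with the structure of a linear algebraic group defined over $\mathcal{C}_K$.

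To make the realization intrinsic and prepare the identification of $\mathcal{C}_K$-points, fix a fundamental matrix $(y_{ij})_{1\leq i,j\leq r}\in \GL_r(L)$ for \eqref{eq1} that generates $L$ over $K$; the same matrix generates $L_D$ over $K_D$ as a Picard--Vessiot extension for the auxiliary ordinary system $DY=A_DY$. For every $\tau \in \DGal(L|K)=\DAut_{\overline{K}}\overline{L}$, the matrix $\tau((y_{ij}))$ is again a fundamental matrix for \eqref{eq1}, hence $\tau((y_{ij}))=(y_{ij})C_\tau$ for a unique $C_\tau \in \GL_r(\overline{\mathcal{C}_K})$. The assignment $\tau\mapsto C_\tau$ is an injective group homomorphism, and through Proposition \ref{groups} its image coincides with the image of $\DGal(L_D|K_D)$ in $\GL_r(\overline{\mathcal{C}_K})$ produced from the same fundamental matrix. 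By the ordinary result recalled above, the latter is the set of $\overline{\mathcal{C}_K}$-points of a $\mathcal{C}_K$-defined closed subgroup of $\GL_r$, giving the desired algebraic-group structure on $\DGal(L|K)$.

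For the $\mathcal{C}_K$-valued points, $\tau$ is $\mathcal{C}_K$-rational in this realization if and only if $C_\tau\in \GL_r(\mathcal{C}_K)$. In that case $\tau((y_{ij}))=(y_{ij})C_\tau$ has all entries in $L=K(\{y_{ij}\})$, so $\tau$ stabilises $L$ and restricts to a $K$-differential automorphism of $L$. Conversely, any $\sigma\in \DAut_K L$ extends to $\overline{L}=L\otimes_{\mathcal{C}_K}\overline{\mathcal{C}_K}$ by $\overline{\mathcal{C}_K}$-linearity on the second factor, and the associated matrix $C_\sigma$ lies in $\GL_r(\mathcal{C}_L)=\GL_r(\mathcal{C}_K)$, the last equality holding because $L/K$ is Picard--Vessiot. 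The main step that the reduction via Proposition \ref{groups} allows us to avoid reproving is the Galois descent of the defining ideal of $\DGal(\overline{L}_D|\overline{K}_D)\subset \GL_r$ from $\overline{\mathcal{C}_K}$ down to $\mathcal{C}_K$; in the ordinary case, this descent is controlled by the continuous $\Gal(\overline{\mathcal{C}_K}/\mathcal{C}_K)$-action on $\overline{L}_D=L_D\otimes_{\mathcal{C}_K}\overline{\mathcal{C}_K}$, which commutes with the derivation $D$ as already exploited in the existence proof above.
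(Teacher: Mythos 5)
Your proposal is correct and follows essentially the same route as the paper: the paper's proof likewise deduces the first assertion from Proposition \ref{groups} together with the ordinary-case result (Proposition 1 of \cite{CHSA}), and treats the identification of $\mathcal{C}_K$-valued points with $\DAut_K L$ as immediate from the definition of $\DGal(L|K)$. Your added detail on the explicit matrix realization $\tau \mapsto C_\tau$ and the descent remark is a faithful unpacking of what the cited ordinary-case proposition provides, not a different argument.
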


\begin{proof} The first assertion follows from Proposition \ref{groups} and~\cite[Proposition~1]{CHSA}. The second one is clear from the definition of $\DGal(L|K)$.
\end{proof}

For a closed subgroup $H$ of $\DGal(L|K)$, $L^H$ is a partial
differential subfield of $L$ containing~$K$. If $E$ is an
intermediate partial differential field, i.e., $K\subset E \subset
L$, then $L/E$ is a~Picard--Vessiot extension and $\DGal(L|E)$
is a $\mathcal{C}_K$-defined closed subgroup of $\DGal(L|K)$. We obtain a Galois correspondence theorem.

\begin{Theorem} Let $K$ be a formally real $($resp.\ formally $p$-adic$)$ partial differential field with real closed $($resp.\ $p$-adically closed$)$ field of constants $\mathcal{C}_K$, let $L/K$ be a~Picard--Vessiot extension and $\DGal(L|K)$ be its differential Galois group.
\begin{enumerate}\itemsep=0pt
\item[$1.$]
The correspondences
\begin{gather*}
H\mapsto L^H , \qquad E\mapsto \DGal(L|E)
\end{gather*}
define inclusion inverting mutually inverse bijective
maps between the set of $\mathcal{C}_K$-defined closed subgroups $H$ of $\DGal(L|K)$
and the set of partial differential fields $E$ with \mbox{$K\!\subset\! E \!\subset\!
L$}.

\item[$2.$] The intermediate partial differential field $E$ is a~Picard--Vessiot extension of $K$ if and only if the subgroup $\DGal(L|E)$ is normal in $\DGal(L|K)$. In this case, the restriction morphism
\begin{align*}
\DGal(L|K)&\ \rightarrow\ \DGal(E|K),
\\
\sigma &\ \mapsto \ \sigma_{|E}
\end{align*}
induces an isomorphism
\begin{gather*}
\DGal(L|K)/\DGal(L|E) \simeq \DGal(E|K).
\end{gather*}
\end{enumerate}
\end{Theorem}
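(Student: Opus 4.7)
The plan is to reduce the theorem to the analogous Galois correspondence for Picard--Vessiot extensions of formally real (resp.\ formally $p$-adic) ordinary differential fields established in~\cite{CHSA}, by means of Kolchin's construction $K \mapsto K_D$ from Remark~\ref{D}. Proposition~\ref{groups} already supplies the restriction isomorphism $\DGal(L_D|K_D) \simeq \DGal(L|K)$ of linear algebraic groups over $\mathcal{C}_K$; the task is then to transfer intermediate partial differential fields of $L/K$ to intermediate ordinary differential fields of $L_D/K_D$ in a way compatible with the Galois correspondence on each side.

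For part~$1$, the transfer is given by $E \mapsto E_D := E\langle u_1,\dots,u_m\rangle$. If $E$ is a partial differential subfield of $L$ containing $K$, then $E_D$ is an ordinary differential subfield of $L_D$ containing $K_D$, and under the isomorphism of Proposition~\ref{groups} the subgroup $\DGal(L_D|E_D)$ corresponds to $\DGal(L|E)$, which is therefore $\mathcal{C}_K$-defined and closed. The crucial identity is
\begin{gather*}
L_D^{\widetilde{H}} = \bigl(L^H\bigr)_D,
\end{gather*}
where $\widetilde{H} \subset \DGal(L_D|K_D)$ corresponds to $H \subset \DGal(L|K)$: indeed, $L_D$ is purely transcendental over $L$, generated by the derivatives $\theta u_i$ (with $\theta$ ranging over monomials in $\partial_1,\dots,\partial_m$), and each such $\theta u_i$ lies in $K_D$ and is therefore fixed by $\widetilde{H}$, so an element of $L_D$ written as a rational expression in the $\theta u_i$ with coefficients in $L$ is $\widetilde{H}$-invariant precisely when each coefficient is $H$-invariant. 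Intersecting with $L$ and using $E_D \cap L = E$ (which follows from the algebraic independence of the $\theta u_i$ over $L$), the identities $L^{\DGal(L|E)} = E$ and $\DGal(L|L^H) = H$ follow from their ordinary-case counterparts. Inclusion inversion is immediate from the definitions.

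For part~$2$, if $E/K$ is a Picard--Vessiot extension with fundamental matrix $Z$ for an integrable partial system, then $E_D/K_D$ is an ordinary Picard--Vessiot extension with the same fundamental matrix $Z$ for the auxiliary system $DY = A_D Y$, exactly as in the existence proof via~\cite[Theorem~1]{Kol1}; the ordinary-case normality of $\DGal(L_D|E_D)$ in $\DGal(L_D|K_D)$ then transfers through Proposition~\ref{groups} to normality of $\DGal(L|E)$ in $\DGal(L|K)$. Conversely, if $\DGal(L|E)$ is normal then so is $\DGal(L_D|E_D)$, and the ordinary Galois correspondence gives $E_D/K_D$ Picard--Vessiot; the descent to a Picard--Vessiot structure on $E/K$ follows from the equivalence of the two definitions of partial Picard--Vessiot extension established in~\cite[Theorem~1]{CHJac}, combined with the identity $E_D \cap L = E$. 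The quotient isomorphism $\DGal(L|K)/\DGal(L|E) \simeq \DGal(E|K)$ is then obtained by composing the ordinary-case quotient isomorphism for the $K_D$-based groups with the restriction isomorphisms of Proposition~\ref{groups} applied to each of the three groups involved.

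The most delicate step will be verifying the identity $L_D^{\widetilde{H}} = (L^H)_D$ and the accompanying descent of the Picard--Vessiot structure from $E_D/K_D$ to $E/K$. Both rest on handling $L_D$ carefully as a purely transcendental extension of $L$ generated by the algebraically independent elements $\theta u_i$, so that coefficient-wise arguments are legitimate. Once this bookkeeping about Kolchin's $D$-construction is established, bijectivity, inclusion inversion, normality, and the quotient isomorphism all transfer from the ordinary setting of~\cite{CHSA} without further obstruction.
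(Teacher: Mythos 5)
Your proposal is correct in outline but follows a genuinely different reduction from the paper's. The paper disposes of this theorem in two lines: it says the proof ``follows the same steps as~\cite[Theorem~1]{CHSA}'', i.e., it stays in the partial differential setting, base-changes the constants to $\overline{\mathcal{C}_K}$ (passing to $\overline{K}=K\otimes_{\mathcal{C}_K}\overline{\mathcal{C}_K}$ and $\overline{L}$), invokes the Galois correspondence for partial differential fields with algebraically closed constants from~\cite[Appendix~D]{PS}, and descends. You instead keep the constants fixed and reduce the partial case to the ordinary case over $\mathcal{C}_K$ via Kolchin's $D$-construction, invoking the ordinary correspondence of~\cite{CHSA} directly over $K_D$. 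Both reductions are available from the material in the paper; yours has the merit of reusing Proposition~\ref{groups} and the $E\mapsto E_D$ transfer already set up for the existence and uniqueness theorems, and your treatment of the key identity $L_D^{\widetilde{H}}=(L^H)_D$ (coefficient-wise invariance of normalized rational expressions in the algebraically independent $\theta u_i$) is sound and is indeed where the work lies. The price is that you must control the passage both from partial to ordinary and back, whereas the paper's route only moves the constants.

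The one step you should shore up is the converse direction of part~2. From normality you correctly get that $E_D/K_D$ is an ordinary Picard--Vessiot extension, but the descent to a \emph{partial} Picard--Vessiot structure on $E/K$ does not follow from~\cite[Theorem~1]{CHJac} and $E_D\cap L=E$ alone: the fundamental matrix produced by the ordinary correspondence generates $E_D$ over $K_D$ for an ordinary system over $K_D$, and you still need to exhibit a fundamental matrix \emph{inside $E$} for an integrable partial system \emph{over $K$}. The tool that does this is Kolchin's equivalence ($E/K$ is partial Picard--Vessiot if and only if $E_D/K_D$ is ordinary Picard--Vessiot,~\cite[Theorem~1]{Kol1}), but the paper only ever applies that theorem over the algebraically closed constant field $\overline{\mathcal{C}_K}$; to use it here you should either check it applies verbatim with real closed (resp.\ $p$-adically closed) constants, or insert the same short detour through $\overline{E}=E\otimes_{\mathcal{C}_K}\overline{\mathcal{C}_K}$ and Galois descent that the paper's own route uses. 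With that point made explicit, the rest of your transfer argument (bijectivity, inclusion inversion, normality, and the quotient isomorphism obtained by conjugating the ordinary-case isomorphism with the three restriction isomorphisms) goes through.
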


\begin{proof}
The proof follows the same steps as the one of~\cite[Theorem~1]{CHSA}. In the present case, we use the Galois correspondence theorem for partial differential fields with an algebraically closed field of constants (see~\cite[Appendix~D]{PS}).
\end{proof}

\subsection{Liouvillian extensions}\label{sLiou}

In this section we restrict to formally real partial differential fields. We note that real Liouville functions may be characterised by topological properties (see~\cite{GK}) and it is therefore interesting to characterise real Liouville solutions to differential systems defined over formally real fields by means of the differential Galois group.

\begin{Definition}\ Let $K$ be a field endowed with pairwise commuting derivations $\partial_1,\dots,\partial_m$. Let $L/K$ be a partial differential field extension, $\alpha$ an element in $L$. We say that $\alpha$ is
\begin{enumerate}\itemsep=0pt
\item[$-$] \emph{an integral over $K$} if $\partial_k \alpha = a_k \in K$, $1 \leq k \leq m$, and $a_k$ is not a derivative in $K$ for all $k$;
\item[$-$] \emph{the exponential of an integral over $K$} if $\partial_k \alpha/\alpha \in K\setminus \{0\}$, $1 \leq k \leq m$.
\end{enumerate}
\end{Definition}

\begin{Remark}\label{intexp}
If $\partial_k \alpha = a_k \in K$, for $1 \leq k \leq m$, then $D \alpha = \sum_{k=1}^m u_k a_k \in K_D$ and if $a_k$ is not a derivative in $K$ for all $k$, then $\sum_{k=1}^m u_k a_k$ is not a derivative in $K_D$. So, if $\alpha$ is an integral over $K$, it is also an integral over $K_D$. Analogously, if $\partial_k \alpha/\alpha \in K\setminus \{0\}$, for $1 \leq k \leq m$, then $D\alpha /\alpha \in K_D\setminus \{0\}$. So, if $\alpha$ is the exponential of an integral over $K$, it is also the exponential of an integral over $K_D$.
\end{Remark}

 Let now $K$ be a formally real field with real closed field of constants $\mathcal{C}_K$. From Proposition~\ref{groups} and the corresponding results in the ordinary case \cite[Examples~7 and~8]{CH}, we obtain that, if $\alpha$ is an integral over $K$, then $K\langle \alpha \rangle/K$ is a real Picard--Vessiot extension and its differential Galois group $\DGal(K\langle \alpha
\rangle|K)$ is isomorphic to the additive group $\G_a$; and, if $\alpha$ is the
exponential of an integral and the field $K\langle \alpha \rangle$ is real and with field of constants equal to $\mathcal{C}_K$, then $K\langle \alpha \rangle/K$ is a~Picard--Vessiot extension and $\DGal(K\langle \alpha
\rangle|K)$ is isomorphic to the multiplicative group $\G_m$, or~a~finite subgroup of it.

\begin{Definition} A partial differential field extension $L/K$ is called a \emph{Liouvillian extension} (resp.\ a \emph{generalised Liouvillian extension}) if there exists a chain of intermediate partial differential fields $K=F_1 \subset F_2 \subset \dots \subset F_n =L$ such that $F_{i+1}=F_i(\alpha_i)$, where $\alpha_i$ is either an integral or the exponential of an integral over $F_i$ (resp.\ or $\alpha_i$ is algebraic over $F_i$), $1\leq i \leq n-1$.
\end{Definition}

\begin{Lemma}\label{Liou}
 If $L/K$ is a $($generalised$)$ Liouvillian extension of partial differential fields then the ordinary differential field extension $L_D/K_D$ is a $($generalised$)$ Liouvillian extension.
\end{Lemma}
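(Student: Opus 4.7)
The plan is to induct on the length $n$ of a witnessing chain for $L/K$. Given
\[
K = F_1 \subset F_2 \subset \cdots \subset F_n = L
\]
with $F_{i+1} = F_i(\alpha_i)$ and each $\alpha_i$ an integral, exponential of an integral, or (in the generalised case) algebraic over $F_i$, I would produce the parallel chain of ordinary differential fields
\[
K_D = (F_1)_D \subset (F_2)_D \subset \cdots \subset (F_n)_D = L_D,
\]
using the \emph{same} independent differential indeterminates $u_1,\dots,u_m$ at every level and equipping each stage with the derivation $D = u_1\partial_1 + \cdots + u_m\partial_m$ inherited from $L_D$.

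The first step is a purely field-theoretic identity: $(F_{i+1})_D = (F_i)_D(\alpha_i)$. Since as a field $(E)_D = E\langle u_1,\dots,u_m\rangle$ is obtained from $E$ by adjoining the $u_j$ together with all their iterated partial derivatives (which, living inside $L_D$, are the same elements no matter which $F_i$ one starts from), one has
\[
(F_{i+1})_D = F_i(\alpha_i)\langle u_1,\dots,u_m\rangle = F_i\langle u_1,\dots,u_m\rangle(\alpha_i) = (F_i)_D(\alpha_i).
\]

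The second step is to check that the Liouvillian character of $\alpha_i$ over $F_i$ is inherited over $(F_i)_D$ with respect to the single derivation $D$. For an integral and for an exponential of an integral this is precisely Remark~\ref{intexp}. If $\alpha_i$ is algebraic over $F_i$ (in the generalised case), it is a fortiori algebraic over the overfield $(F_i)_D$. The displayed chain therefore exhibits $L_D/K_D$ as an ordinary (generalised) Liouvillian extension. The proof is essentially bookkeeping; the only substantive point is the reformulation of the $m$ conditions $\partial_k\alpha = a_k$ (resp.\ $\partial_k\alpha/\alpha = a_k$) into the single condition $D\alpha = \sum_k u_k a_k$ (resp.\ $D\alpha/\alpha = \sum_k u_k a_k$), and this is exactly the content of Remark~\ref{intexp}, so no real obstacle arises.
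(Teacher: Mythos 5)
Your proposal is correct and follows essentially the same route as the paper's own proof: both rest on the field identity $(F_{i+1})_D=F_i(\alpha_i)\langle u_1,\dots,u_m\rangle=F_i\langle u_1,\dots,u_m\rangle(\alpha_i)=(F_i)_D(\alpha_i)$ and then invoke Remark~\ref{intexp} (plus the trivial observation for the algebraic case). The induction framing is cosmetic; the substance matches.
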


\begin{proof}
 Let $K=F_1 \subset F_2 \subset \dots \subset F_n =L$ be a chain of intermediate partial differential fields such that $F_{i+1}=F_i(\alpha_i)$.
 We consider the chain of ordinary differential fields $K_D \subset (F_2)_D \subset \dots \subset L_D$. We have $(F_{i+1})_D=F_{i+1}\langle u_1,\dots,u_m \rangle=F_i(\alpha_i)\langle u_1,\dots,u_m \rangle=F_i\langle u_1,\dots,u_m \rangle(\alpha_i)=(F_i)_D(\alpha_i)$. Now, by Remark \ref{intexp}, if $\alpha_i$ is an integral or the exponential of an integral or algebraic over $F_i$, then $\alpha_i$ is an integral or the exponential of an integral or algebraic over $(F_i)_D$, respectively.
\end{proof}

\begin{Definition} Let $G$ be a connected solvable linear algebraic group defined over a field $\mathcal{C}$. We say that $G$ is $\mathcal{C}$-split if it has a composition series $G=G_1 \supset G_2 \supset \cdots \supset G_s = 1$ consisting of connected $\mathcal{C}$-defined closed subgroups such that $G_i/G_{i+1}$ is $\mathcal{C}$-isomorphic to $\G_a$ or $\G_m$, $1\leq i <s$.
\end{Definition}

From the results obtained in the ordinary case \cite[Section~3, Theorems~17 and~18]{CH}, we obtain the characterisation of real
Liouvillian extensions of real partial differential fields.

\begin{Theorem}\label{genL}
Let $K$ be a real partial differential field with real closed field of constants $\mathcal{C}_K$, $L/K$ be a formally real Picard--Vessiot extension, $\DGal(L|K)$ be its differential Galois group. The following conditions are equivalent:
\begin{enumerate}\itemsep=0pt
\item[$1.$] $L/K$ is a generalised Liouvillian extension.
\item[$2.$] $L$ is contained in a generalised Liouvillian extension $M$ of $K$, such that the field of constants of $M$ is $\mathcal{C}_K$ and $M$ is a formally real field.
\item[$3.$] The identity component of $\DGal(L|K)$ is solvable and $\mathcal{C}_K$-split.
\end{enumerate}
\end{Theorem}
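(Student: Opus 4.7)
The plan is to reduce the equivalences to the ordinary real differential case treated in \cite[Theorems 17 and 18]{CH} via the $D$-construction, combining Lemma~\ref{Liou}, which transports generalised Liouvillian towers upward from $K$ to $K_D$, with Proposition~\ref{groups}, which identifies the corresponding Galois groups. The implication $(1)\Rightarrow(2)$ is immediate upon taking $M=L$, since $L$ is formally real with field of constants $\mathcal{C}_K$ by hypothesis.

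For $(2)\Rightarrow(3)$, I would start from a generalised Liouvillian tower from $K$ to $M$ with $L\subset M$, $M$ formally real, and field of constants $\mathcal{C}_K$. Lemma~\ref{Liou} gives that $M_D/K_D$ is a generalised Liouvillian extension of ordinary differential fields, while Remark~\ref{D} ensures that $M_D$ is formally real with field of constants $\mathcal{C}_K$. Since $L_D\subset M_D$ and $L_D/K_D$ is itself a formally real Picard--Vessiot extension, the ordinary case result \cite[Theorem 18]{CH} applies and yields that the identity component of $\DGal(L_D|K_D)$ is solvable and $\mathcal{C}_K$-split. Proposition~\ref{groups} then transfers this conclusion verbatim to $\DGal(L|K)$.

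For $(3)\Rightarrow(1)$, write $G=\DGal(L|K)$ and fix a composition series $G^\circ=G_1\supset G_2\supset\dots\supset G_s=\{1\}$ of the identity component, consisting of $\mathcal{C}_K$-defined connected closed subgroups with successive quotients $\mathcal{C}_K$-isomorphic to $\G_a$ or $\G_m$. The Galois correspondence established in the preceding subsection produces an ascending tower $K\subset L^{G^\circ}\subset L^{G_2}\subset\dots\subset L^{G_s}=L$ of intermediate partial differential fields. The step $L^{G^\circ}/K$ is algebraic because $G/G^\circ$ is finite; each subsequent step $L^{G_{i+1}}/L^{G_i}$ is a formally real Picard--Vessiot extension whose Galois group is isomorphic over $\mathcal{C}_K$ to $\G_a$ (resp.\ $\G_m$). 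Transferring to the ordinary case via Proposition~\ref{groups} and appealing to \cite[Examples 7 and 8]{CH}, one obtains a generator $\alpha$ of $L^{G_{i+1}}/L^{G_i}$ on which the group acts by $\alpha\mapsto\alpha+c$ (resp.\ $\alpha\mapsto c\alpha$) with $c\in\mathcal{C}_K$. Since the derivations $\partial_k$ commute with the Galois action and constants are annihilated, $\partial_k\alpha\in L^{G_i}$ for every $k$ (resp.\ $\partial_k\alpha/\alpha\in L^{G_i}$), so the step is Liouvillian in the partial sense.

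The principal obstacle is this descent step in $(3)\Rightarrow(1)$: a Liouvillian chain constructed inside $L_D$ by direct application of the ordinary result need not restrict to a chain inside $L$, since the intermediate fields may genuinely involve the transcendental generators $u_1,\dots,u_m$. The Galois-theoretic approach above avoids this difficulty by building the tower directly inside $L$ from the composition series of $G^\circ$, and calling upon the ordinary case only at the level of elementary $\G_a$- and $\G_m$-extensions, where a generator can be exhibited inside $L^{G_{i+1}}$ through its Galois-transformation rule.
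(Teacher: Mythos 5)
Your handling of $(1)\Rightarrow(2)$ and $(2)\Rightarrow(3)$ coincides with the paper's: take $M=L$ for the first, and for the second push the tower up to $M_D/K_D$ via Lemma~\ref{Liou} and Remark~\ref{D}, apply \cite[Theorem~18]{CH} to the ordinary Picard--Vessiot extension $L_D/K_D$, and transfer back through Proposition~\ref{groups}. Your diagnosis of why $(3)\Rightarrow(1)$ cannot simply be pulled back from $L_D$ is also sound, and both you and the paper therefore work directly inside $L$. But the routes then diverge: you pass through the Galois correspondence and a composition series $G^0=G_1\supset\cdots\supset G_s=1$, reducing to rank-one steps $L^{G_{i+1}}/L^{G_i}$ with group $\G_a$ or $\G_m$; the paper instead invokes Borel \cite[Chapter~V, Theorem~15.4]{B} to triangularize $G^0$ over $\mathcal{C}_K$ in one stroke, producing $v_1,\dots,v_r$ with $\sigma v_j=a_{1j}v_1+\cdots+a_{jj}v_j$, and runs an induction on $r$ by dividing by $v_1$ and applying $\partial_k$.

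The gap in your version sits exactly at the rank-one steps. You cite \cite[Examples~7 and~8]{CH} to ``obtain a generator $\alpha$ on which the group acts by $\alpha\mapsto\alpha+c$ (resp.\ $\alpha\mapsto c\alpha$)'', but those examples establish the \emph{forward} implication only: if $\alpha$ is an integral (resp.\ the exponential of an integral), then the Galois group is $\G_a$ (resp.\ $\G_m$ or a finite subgroup). What you need is the converse --- that a Picard--Vessiot extension whose group is $\mathcal{C}_K$-isomorphic to $\G_a$ or $\G_m$ is generated by such an element --- and over a real closed, non-algebraically-closed $\mathcal{C}_K$ this is precisely where the splitness hypothesis has to be spent: one must show that the representation of the rank-one group on the solution space is trigonalizable (indeed diagonalizable for $\G_m$, unipotent-triangular for $\G_a$) \emph{by a matrix with entries in} $\mathcal{C}_K$, so that the eigen- or flag vectors $v_1,v_2$ from which $\alpha=v_2/v_1$ (or $\partial_k v_1/v_1$) is built actually lie in $L$ rather than in $\overline{L}=L\otimes_{\mathcal{C}_K}\overline{\mathcal{C}_K}$. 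The paper's own example of the system \eqref{ex2} with group $\SO_2$ shows that a one-dimensional connected solvable group acting on a real solution space need not admit such a real trigonalization when it is not $\mathcal{C}_K$-split; your argument never invokes the tool (Borel's theorem on split solvable groups, or the $\mathcal{C}_K$-rationality of characters of $\G_m$ and of fixed vectors of $\G_a$) that rules this out at each step. Once you supply that ingredient, your induction closes --- your final observation that $\partial_k\alpha$, being $G_i/G_{i+1}$-invariant, descends to $L^{G_i}$ is correct --- but as written the crucial existence of $\alpha$ inside $L^{G_{i+1}}$ is asserted on the strength of a citation that does not contain it.
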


\begin{proof} Clearly (1) implies (2). By Lemma \ref{Liou}, (2) implies that $L_D$ is contained in a formally real generalised Liouvillian extension of $K_D$, not adding constants to $K_D$. Then by Proposition~\ref{groups} and~\cite[Theorem~18]{CH}, we obtain (3).

We assume now that the identity component $G^0$ of $G:=\DGal(L|K)$ is solvable and $\mathcal{C}_K$-split and let $L^0=L^{G^0}$. Since $\big[G:G^0\big]$ is finite, $L^0/K$ is a finite extension. By~\cite[Chapter~V, Theorem~15.4]{B}, in $G^0$ is triangularizable by a matrix with entries in~$\mathcal{C}_K$. We may then assume that there exist elements
$v_1,\dots,v_r \in L$ such that $L=L^0\langle v_1,\dots,v_r \rangle$ and for every $\sigma \in G^0$ we~have
\begin{gather}\label{eq:liou}
\sigma v_j = a_{1j} v_1 + \dots + a_{j-1,j} v_{j-1}+a_{jj} v_j , \qquad j=1,\dots,r,
\end{gather}
with $a_{ij}$ constants in $L(i)$ (depending on $\sigma$). The first equality is $\sigma v_1=a_{11} v_1$ which implies $\sigma(\partial_k v_1/v_1)=\partial_k v_1/v_1$, for all $\sigma \in G^0$ and all $k=1,\dots,m$, hence $v_1$ is the exponential of an integral over $L^0$. Now dividing the equations in (\ref{eq:liou}) for $j=2$ to $r$ by the first equation and applying $\partial_k$, we obtain
\begin{gather*}
\sigma \partial_k \bigg(\dfrac {v_j}{v_1}\bigg)
= \dfrac {a_{2j}}{a_{11}} \partial_k \bigg(\dfrac{v_2}{v_1} \bigg)+ \dots + \dfrac {a_{j-1,j}}{a_{11}} \partial_k \bigg(\dfrac{ v_{j-1}}{v_1} \bigg)+
\dfrac{a_{jj}}{a_{11}} \partial_k \bigg(\dfrac{ v_j}{v_1} \bigg), \qquad j=2,\dots,r,
\end{gather*}
for $k=1,\dots,m$. By induction hypothesis on $r$, we obtain that the extension
\begin{gather*}
L^1:=L^0\langle v_1 \rangle \bigg\langle \bigg\{\partial_k \bigg(\dfrac {v_j}{v_1}\bigg)\bigg\}_{k=1,\dots,m; j=2,\dots,r}\bigg\rangle\Big/L^0
\end{gather*}
is a Liouvillian extension. Now the elements $v_j/v_1$ are integral over $L^1$ and we obtain that $L/L^0$ is a Liouvillian extension and $L/K$ is a generalised Liouvillian extension.
\end{proof}

\begin{Remark}
From Lemma \ref{Liou}, Proposition \ref{groups} and Theorem \ref{genL}, we obtain that $L/K$ is a formally real (generalised) Liouvillian extension of partial differential fields if and only if the ordinary differential field extension $L_D/K_D$ is a formally real (generalised) Liouvillian extension.
\end{Remark}

\begin{Example} Let $K:=\R(t_1,t_2)$ be the field of rational functions in the variables $t_1$, $t_2$ over the field $\R$ of real numbers endowed with the usual derivations $\partial_1:= \partial /\partial t_1$ and $\partial_2:= \partial /\partial t_2$. The field of constants of $K$ is $\R$. We consider the differential system
\begin{gather}\label{ex2}
\partial_1 Y= \begin{pmatrix} 1/t_1 & 0 \\ 0 & 1/t_1 \end{pmatrix} Y,\qquad
\partial_2 Y= \begin{pmatrix} 0&1 \\ -1 &0 \end{pmatrix} Y,
\end{gather}
defined over $K$. A fundamental matrix for the system (\ref{ex2}) is
\begin{gather*}
\begin{pmatrix} t_1 \sin t_2 & -t_1 \cos t_2 \\ t_1 \cos t_2 & t_1 \sin t_2 \end{pmatrix}\!,
\end{gather*}
hence $L:=K\langle t_1 \sin t_2,t_1 \cos t_2 \rangle=K\langle \sin t_2, \cos t_2 \rangle$ is a~Picard--Vessiot extension of $K$ for~\eqref{ex2}.
An element $\sigma$ in the Galois group $\DGal(L/K)$ is determined by $\sigma(\sin t_2)$ and $\sigma(\cos t_2)$. If~$\sigma(\sin t_2)\allowbreak=a\sin t_2 +b \cos t_2$, then $\sigma(\cos t_2)=\sigma(\partial_2(\sin t_2))=\partial_2(\sigma(\sin t_2))=a\cos t_2 - b \sin t_2$. Moreover $(\sin t_2)^2+ (\cos t_2)^2=1$ implies $(\sigma(\sin t_2))^2+ (\sigma(\cos t_2))^2=1$, which gives $a^2+b^2=1$. We obtain then $\DGal(L/K) \simeq \SO_2$.

We consider now the differential system
\begin{gather}\label{ex3}
\partial_1 Y= \begin{pmatrix} 1/t_1 & 0 \\ 0 & 1/t_1 \end{pmatrix} Y,\qquad
\partial_2 Y= \begin{pmatrix} 0&1 \\ 1 &0 \end{pmatrix} Y,
\end{gather}
defined over $K$. A fundamental matrix for the system (\ref{ex3}) is
\begin{gather*}
\begin{pmatrix} t_1 \sinh t_2 & t_1 \cosh t_2 \\ t_1 \cosh t_2 & t_1 \sinh t_2 \end{pmatrix}\!,
\end{gather*}
hence $L:=K\langle t_1 \sinh t_2,t_1 \cosh t_2 \rangle=K\langle \sinh t_2, \cosh t_2 \rangle=K\big\langle {\rm e}^{t_2} \big\rangle$ is a~Picard--Vessiot extension of $K$ for (\ref{ex3}).
An element $\sigma$ in the Galois group $\DGal(L/K)$ is determined by $\sigma\big({\rm e}^{t_2}\big)$ and $\sigma\big({\rm e}^{t_2}\big)=c {\rm e}^{t_2}$, with $c \in \R\setminus \{0\}$. We obtain then $\DGal(L/K) \simeq \G_m$.

We note that $\SO_2$ is isomorphic to $\G_m$ over $\C$ but not over $\R$. Considering the systems~(\ref{ex2}) and (\ref{ex3}) as defined over the field $\R(t_1,t_2)$ we obtain that the Picard--Vessiot extension for~(\ref{ex3}) is a real Liouvillian extension whereas the Picard--Vessiot extension for~(\ref{ex2}) is not a real gene\-ralised Liouvillian extension, since $\SO_2$ is not $\R$-split.\footnote{We note that Remark~3 in~\cite{GK} is incorrectly translated from the Russian original. The third sentence must read: ``This circle {\it does not have} a normal tower of subgroups with quotient groups isomorphic either to the additive or the multiplicative group of the field $\R$''.}
\end{Example}
\section{Comments on the integrability of real dynamical systems}

In this section we present the relationship between the concept of real Liouville function and the results on Liouvillian extensions of formally real differential fields treated in~\cite {CH} and Section~\ref{sLiou} of this paper. The solutions of differential equations which until now were abstract elements are considered in this section as real functions in real variables and we study their properties as such functions. Considering them as elements of a field of real functions, which is a formally real differential field, the results of the preceding sections apply. We comment as well on some questions related with the integrability of real dynamical systems going beyond the Liouviallinity questions. It is worth noting that real Liouville functions are a particular case of Pfaff functions which A. Khovanskii studies in his monograph~\cite{Kh}. However Pfaff functions may appear as solutions to non-linear differential equations and at present a Galois theory for non-linear differential equations defined over a formally real differential field has not been established. Khovanskii's theory uses essentially the completeness of the field $\R$ of real numbers. It~would be interesting to give an algebraic approach to this theory allowing to consider also real closed fields different from the field of real numbers.

{\sloppy
On the other hand, Theorem \ref{genL} shows that real Liouvillianity of an extension $L/K$ of formally real differential fields is codified in the structure of the differential Galois group $\DGal(L/K)$. A condition of this type is suggested by A.~Grothendieck in his proposal of an axiomatic definition of tame topology (cf.~\cite[p.~272]{GGA}). In our approach we use differential Galois groups defined over the real closed field of constants.

}

\subsection{Real Liouvillianity of solutions}
To any open and connected subset $U \subset \R^n$ we assign the integral domain of real analytic functions $\mathcal{O}(U)$ equipped with standard partial derivations $\partial_1 = \frac{\partial}{\partial x_1}, \dots ,\partial_n = \frac{\partial}{\partial x_n}$. Let $\mathcal{M}(U)$ denote the fraction field $\operatorname{Fr}(\mathcal{O}(U))$. We identify the elements of $\mathcal{M}(U)$ with the complete meromorphic fractions, i.e., the functions
\begin{gather*}
\phi \colon\ U \setminus \{g=0\}\ni x \mapsto \frac{f(x)}{g(x)} \in \R,
\end{gather*}
which are maximal elements with respect to inclusion.
From now on, we consider only differential fields which are subfields of differential fields of the form $(\mathcal{M}(U),\partial_1, \dots, \partial_n)$. Besides, we identify the field $\R$ with the trivial differential structure with the subfield of constant functions in $\mathcal{M}(U)$.

\begin{Definition} Let $f\colon U \to \R$ be a real analytic function defined in an open and connected subset $U \subset \R^n$. $f$ is called a \emph{real Liouville function} (resp.\ \emph{generalised real Liouville function}) if it lies in some formally real Liouvillian
extension (resp.\ generalised formally real Liouvillian extension) $K \subset \mathcal{M}(U)$ of the field $\R\subset \mathcal{M}(U)$.
\end{Definition}

\begin{Example} Let us consider the following dynamical system in $\mathbb{R}^2$.
\begin{gather*}
Y^{\prime}=\begin{pmatrix} 0 & 1 \\ 1/x^{2} & 0 \end{pmatrix}Y,
\end{gather*}
which corresponds to the second order linear differential equation $y^{\prime \prime
}-\big(1/x^{2}\big)y=0,$ defined over the differential field $K:=\mathbb{R}(x)$, with the usual derivation ${\rm d}/{\rm d}x$. Applying Kovacic's algorithm
(see~\cite[Section~7.3]{CHgsm}), we obtain that the two following real functions form a basis of the vector space of solutions
\begin{gather*}
y_{1}=x^{(1+\sqrt{5})/2}, \qquad y_{2}=x^{(1-\sqrt{5})/2}.
\end{gather*}
A Picard--Vessiot extension for this system over
$K$ is then $K\langle y_1, y_2 \rangle = K \langle y_1 \rangle$, since $y_{1}y_{2}=x\! \in\! K$. A $K$-differential isomorphism from $K \langle y_1 \rangle$ to $\C(x)\langle y_1 \rangle$ sends $y_{1}$ to $\lambda y_{1}$ with $\lambda \in \C^{\ast }$. Hence the differential Galois group is the multiplicative group $\mathbb{G}_m$, defined over $\R$.

Let us observe that the real Liouville functions $y_1$ and $y_2$ fulfill Pfaff equations of the form $Y'=\alpha \frac{Y}{x}$, where $\alpha= \frac 1 2 \big(1\pm \sqrt{5}\big)$ and therefore they can be studied by the methods of Pfaff geometry. More precisely the graphs of $y_{1}$ and $y_{2}$ are leaves of the foliations
in $\mathbb{R}^2$ defined by the 1-forms $\omega _{\alpha }=\alpha y{\rm d}x-x{\rm d}y$. Considering them in the positive half plane $\{x>0\}$ we obtain separating solutions to which Khovanskii's theory is aplicable (cf.\ \cite[pp.~4 and~5]{Kh}). In this paper we do not enter in details of this theory but an interested reader can consult \cite{Kh}.
\end{Example}

Our next example is related with real Liouville first integrals of simple gradient systems. Recently gradient systems are intensively studied in relation with the gradient conjecture and its generalisations (see~\cite{ARTH,CM}).

\begin{Example}\label{E}
T.H.~Colding and W.P.~Minicozzi II in their recent work~\cite{ARTH} have formulated a~generalised version of the famous Ren\'e Thom conjecture. More precisely Conjecture~1.1 from~\cite{ARTH} has been proved in~\cite{KK}, while the following conjecture \cite[Conjecture~1.2]{ARTH},
 called by the authors Arnold--Thom conjecture, remains open.

Let $f$ be a real analytic function in an open set $U \subset \R^n$ and $\grad f$ be its gradient in the Euclidean metric.

\begin{t*}
If a gradient flow line $x(t)$ has a limit point $x_0 \in U$, then the limit of the unit tangents $\frac{x'(t)}{|x(t)|}$ at $x_0$ exists.
\end{t*}

 In our second example, we present a polynomial gradient system that admits Liouville first integral. Its trajectories in the neighborhood of the origin are $C^1$ manifolds with boundary, but do not admit a $C^2$ extension through their limit point. Thus this example shows that the above statement of Arnold--Thom conjecture is optimal.

Let us consider a simple quadratic polynomial potential on $\mathbb{R}^2$
\begin{gather*}
f(x,y)=\lambda x^2+\mu y^2, \qquad \lambda,\mu \in \mathbb{R}\setminus \{0\}
\end{gather*}
and its gradient dynamical system
\begin{gather}
\frac{{\rm d}x}{{\rm d}t} = 2 \lambda x, \nonumber
\\
\frac{{\rm d}y}{{\rm d}t} = 2 \mu y.\label{grad}
\end{gather}
Solving (\ref{grad}), one obtains the following real Liouvillian first integral
\[
I(x,y)=\dfrac{x^{\mu}}{y^{\lambda}}.
\]
It is easy to see that for $\lambda=2$, $\mu=3$ and $I(x,y)=1$, the level curve is an ordinary cusp. Moreover as the curvature at the origin of $\big\{(x,y) \in \mathbb{R}^2 \colon y^2=x^3 \big\}$ tends to infinity, a $C^2$ prolongation of the branch curve $y=x^{3/2}$ is impossible (see, e.g.,~\cite[Theorem~5.1.6]{DFN}).
\end{Example}

\subsection{Final remarks}

The Arnold--Thom conjecture was known at the beginning of the 90's of last century under the name ``Tangent Problem'' (see~\cite[Section~9]{L}).
One more interesting conclusion suggested by the Example \ref{E} is that rational or, more generally, meromorphic integrability is quite exceptional even among systems that admit Liouville integrals. An interested reader can consult the excellent survey on the classical integrability problems~\cite{MP} and some more computational examples of non-integrable real dynamical systems in~\cite{HM}.

\section*{Acknowledgments}

We are very thankful to the anonymous referees for their valuable comments which helped us to improve significantly the presentation of our results.
R.~Mohseni acknowledges support of the Polish Ministry of Science and
Higher Education. T.~Crespo and Z.~Hajto acknowledge support of
grant PID2019-107297GB-I00 (MICINN).

\pdfbookmark[1]{References}{ref}
\LastPageEnding

\end{document}